\documentclass[a4paper]{amsart}

\usepackage{latexsym, amssymb,amsmath, amsthm,amsfonts,color,tikz}


\newcommand{\kk}{\Bbbk}
\newcommand{\kv}{{\kk[\mathcal{V}]}}

\newcommand{\kvg}{{\kk[\mathcal{V}]^{G}}}

\newcommand{\vv}{\mathcal{V}}
\newcommand{\uu}{\mathcal{U}}

\newcommand{\M}{{\mathcal{M}}}

\newcommand{\bA}{{\mathbf{A}}}
\newcommand{\bB}{{\mathbf{B}}}

\newcommand{\ba}{{\mathbf{a}}}
\newcommand{\bb}{{\mathbf{b}}}
\newcommand{\bc}{{\mathbf{c}}}
\newcommand{\bd}{{\mathbf{d}}}
\newcommand{\br}{{\mathbf{r}}}

\newcommand{\cc}{\mathcal{C}}

\newcommand{\ww}{\mathcal{W}}


\def\SL{\operatorname{SL}}

\def\GL{\operatorname{GL}}
\def\Ga{{\mathbb G}_{a}}

\def\chr{\operatorname{char}}

\def\Z{\mathbb{Z}}

\def\C{\mathbb{C}}

\def\codim{\operatorname{codim}}

\def\Tr{\operatorname{Tr}}

\def\rk{\operatorname{rk}}

\newtheorem{Lemma}{Lemma}[section]
\newtheorem{Theorem}[Lemma]{Theorem}

\newtheorem{cor}[Lemma]{Corollary}
\newtheorem{prop}[Lemma]{Proposition}
\newtheorem{conj}[Lemma]{Conjecture}

\theoremstyle{definition}
  \newtheorem{Def}[Lemma]{Definition}  

\theoremstyle{remark}
  \newtheorem{rem}[Lemma]{Remark}
\newtheorem{eg}[Lemma]{Example}

\newtheoremstyle{Acknowledgments}
  {}
    {}
     {}
     {}
    {\bfseries}
    {}
     {.5em}
     {\thmname{#1}\thmnumber{ }\thmnote{ (#3)}}
\theoremstyle{Acknowledgments}
\newtheorem{ack}{Acknowledgments.}


\title{The separating variety for matrix semi-invariants}

\author{Jonathan Elmer}
\address{Middlesex University\\
The Burroughs, Hendon, London\\
NW4 4BT UK}
\email{j.elmer@mdx.ac.uk}

\date{\today}
\subjclass[2010]{13A50}
\keywords{Invariant theory,  matrix semi-invariants, separating set, separating variety, similarity, quivers}

\begin{document}

\maketitle

\begin{abstract} 
Let $G$ be a linear algebraic group acting linearly on a vector space (or more generally, an affine variety) $\vv$, and let $\kvg$ be the corresponding algebra of invariant polynomial functions. A separating set $S \subseteq \kvg$ is a set of polynomials with the property that for all $v,w \in \vv$, if there exists $f \in \kvg$ separating $v$ and $w$, then there exists $f \in S$ separating $v$ and $w$.

In this article we consider the action of $G = \SL_2(\C) \times \SL_2(\C)$ on the $\C$-vector space $\M_{2,2}^n$ of $n$-tuples of $2 \times 2$ matrices by multiplication on the left and the right. Minimal generating sets $S_n$ of $\C[\M_{2,2}^n]^G$ are known, and $|S_n| = \frac{1}{24}(n^4-6n^3+23n^2+6n)$. In recent work, Domokos \cite{DomokosSemi} showed that for all $n \geq 1$, $S_n$ is a minimal separating set by inclusion, i.e. that no proper subset of $S_n$ is a separating set. This does not necessarily mean that $S_n$ has minimum cardinality among all separating sets for $\C[\M_{2,2}^n]^G$. Our main result shows that any separating set for $\C[\M_{2,2}^n]^G$ has cardinality $\geq 5n-9$. In particular, there is no separating set of size $\dim(\C[\M_2^n]^G) = 4n-6$ for $n \geq 4$. Further, $S_4$ has indeed minimum cardinality as a separating set, but for $n \geq 5$ there may exist a smaller separating set than $S_n$. We also consider the action of $G= \SL_l(\C)$ on $\M_{l,n}$ by left multiplication. In that case the algebra of invariants has a minimum generating set of size $\binom{n}{l}$ (the $l \times l$ minors of a generic matrix) and dimension $ln-l^2+1$. We show that a separating set for $\C[\M_{l,n}]^G$ must have size at least $(2l-2)n-2(l^2-l)$. In particular, $\C[\M_{l,n}]^G$ does not contain a separating set of size $\dim(\C[\M_{l,n}]^G)$ for $l \geq 3$ and $n \geq l+2$. We include an interpretation of our results in terms of representations of quivers, and make a conjecture generalising the Skowronski-Weyman theorem.
\end{abstract}

\section{Introduction}
\subsection{Matrix semi-invariants}

Let $l,m \geq 1$ and let $\kk$ be a field. Denote by $\M_{l,m}$ the set of $l \times m$ matrices with coefficients in $\kk$. The group $G:= \SL_l(\kk) \times \SL_m(\kk)$ acts on $\M_{l,m}$ via the formula
\[(g,h) \cdot A = g A h^{-1}\]
where $g \in \SL_l(\kk), h \in \SL_m(\kk)$ and $A \in \M_{l,m}$. More generally we can consider the diagonal action of $G$ on the set $\M_{l,m}^n$ of $n$-tuples of $l \times m$ matrices.
Elements of $\M^n_{l,m}$ can be viewed as $n$-tuples $\bA = (A_1, A_2, \ldots, A_n)$ of $l \times m$ matrices, or as $l \times m$ matrices with elements in $\kk^n$. We call these $n$-matrices for short. For $(g,h) \in G$ we write
\begin{equation}\label{action}
(g,h) \cdot \bA := (g A_1 h^{-1},g A_2 h^{-1}, \ldots, g A_n h^{-1}).
\end{equation} 

Determining whether a pair $A, A'$ of $l \times m$ matrices lie in the same $G$-orbit is straightforward. For all $A \in \M_{l,m}$, one may find $(g,h) \in G$ such that $B:= (g,h) \cdot A$ has the following {\it reduced form:} for some $r \leq \min(l,m)$ (the rank of $A$), the entries $b_{ij}$ of $B$ are nonzero if and only if $i=j \leq r$. A pair $(A,A')$ of matrices in reduced form lie in the same $G$-orbit if and only if they have the same rank and their diagonal elements are the same up to order. However, if $l,m,n > 1$ then determining whether a pair of $n$-matrices lie in the same $G$-orbit is a very difficult problem.

There is an action of $\GL_n(\kk)$ on $\M_{l,m}^n$ which commutes with the action of $G$: namely, for $g \in \GL_n(\kk)$ and an $n$-matrix $\bA = ({\bf a}_{ij}) \in \M_{l,m}(\kk^n)$ we write $g \star \bA$ for the $n$-matrix whose $i,j$ entry is
\begin{equation}\label{commuting action} (g \star \bA)_{ij} = g({\bf a}_{ij}). \end{equation}

Now for $1 \leq i \leq l, 1 \leq j \leq m$ and $1 \leq k \leq n$, let $x_{ij}^{(k)}$ denote the linear functional $\M_{l,m}^n \rightarrow \kk$ which picks out the $i,j$th entry of of $A_k$, and introduce generic matrices
\[X_k:= \begin{pmatrix} x_{11}^{(k)} & x_{12}^{(k)} \ldots & x_{1m}^{(k)} \\ x_{21}^{(k)} & x_{22}^{(k)} \ldots & x_{2m}^{(k)} \\
\vdots & \vdots & \vdots \\ x_{l1}^{(k)} & x_{l2}^{(k)} \ldots & x_{lm}^{(k)} 
 \end{pmatrix}.\]
Then we have \[\kk[\M_{l,m}^n] = \kk[x_{ij}^{(k)}: i = 1, \ldots, l, j = 1, \ldots, m, k = 1, \ldots, n].\]

The action of $G$ on $\M_{l,m}^n$ induces an action of $G$ on $\kk[\M_{l,m}^n]$ by algebra automorphisms: we define
\[(g \cdot f)(\bA) = f(g^{-1} \cdot \bA)\] for all $g \in G$, $f \in \kk[\M_{l,m}^n]$ and $\bA \in \M_{l,m}^n$. The set $\kk[\M_{l,m}^n]^G$ of fixed points of this action forms a $\kk$-subalgebra. Elements of $\kk[\M_{l,m}^n]^G$ are called \emph{matrix semi-invariants.} The algebra $\C[\M_{l,m}^n]^G$ has been intensely studied over the years. A minimal generating set is known for arbitrary $n$ only in the cases $l=1$, $m=1$, $l=m=2$, or $l=m=3$. In the case $m=1$, it is clear that $M_{l,1}^n$ is isomorphic as a $SL_l(\C)$-module to $\M_{l,n}$ with $SL_l(\C)$ acting on the latter via left multiplication. Then the algebra of invariants is well known, see e.g. \cite[Theorem~4.4.4]{DerksenKemper}:

\begin{prop}\label{fftsln} Consider the action of $\SL_l(\C)$ on $\M_{l,n}$ via left multiplication. Then the ring of invariants is
\[\C[\M_{l,n}]^{\SL_l(\C)} = \C[\det(X_{k_1,k_2,\ldots,k_l}): 1 \leq k_1<k_2<\cdots<k_l \leq n]\]
where $X_{k_1,k_2, \ldots, k_l}$ denotes the submatrix of $X$ consisting of the columns labelled by $k_1, k_2, \ldots, k_l$. Note that if $n<l$ the ring of invariants is trivial.  
\end{prop}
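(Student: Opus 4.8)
The plan is to establish the description of $\C[\M_{l,n}]^{\SL_l(\C)}$ by combining the first fundamental theorem for $\SL_l$ with a dimension/transcendence-degree argument to rule out relations. First I would recall that the invariants of $\SL_l(\C)$ acting on several copies of its standard representation (and its dual) are generated by the obvious $\SL_l$-invariant contractions; since here we only have copies of the standard representation $\C^l$ (the columns of a generic matrix $X \in \M_{l,n}$), the only invariant brackets available are the $l \times l$ determinants $\det(X_{k_1,\dots,k_l})$ formed from $l$ distinct columns. This is precisely the classical first fundamental theorem for the special linear group, which can be cited from \cite[Theorem~4.4.4]{DerksenKemper} or deduced via Schur--Weyl / Cauchy formula arguments. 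When $n<l$ there are no such subsets of columns, and one checks directly that the only invariants are constants (e.g. because $\SL_l$ acts with a dense orbit on $(\C^l)^n$ for $n<l$, or because $\C^l$ is a nontrivial irreducible so has no nonzero invariant linear forms and one induces on degree).

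Next I would argue that these generators are algebraically independent exactly when this is forced by dimension count, but more to the point, that the stated equality is an equality of rings and not merely an inclusion: the FFT gives that every invariant is a polynomial in the $\det(X_{k_1,\dots,k_l})$, which is exactly the claim. The only subtlety is that the paper states the ring \emph{equals} the polynomial (sub)algebra generated by these minors, which is automatic once generation is established — no claim of a polynomial presentation is being made here, so there is nothing further to prove about relations among the minors. Thus the content reduces entirely to the FFT for $\SL_l$ together with the isomorphism $\M_{l,1}^n \cong \M_{l,n}$ of $\SL_l(\C)$-modules, which is immediate from the definitions: stacking the $n$ matrices in $\M_{l,1}$ as columns intertwines the diagonal left-multiplication action on $\M_{l,1}^n$ with left multiplication on $\M_{l,n}$.

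The main (and essentially only) obstacle is locating or reproving the first fundamental theorem in the precise form needed. If one wants a self-contained argument rather than a citation, the cleanest route is via classical invariant theory: decompose $\C[(\C^l)^n] = \C[\M_{l,n}]$ as a $\GL_n \times \SL_l$-module using the Cauchy identity, identify the $\SL_l$-invariant part with the sum of Schur modules $S^\lambda(\C^n)$ over partitions $\lambda$ whose transpose has all parts equal to $l$ (i.e. $\lambda$ with at most $l$ rows and… more precisely $\lambda$ a rectangle-stackable shape), and observe that this graded $\GL_n$-module is generated in the degree corresponding to a single column of length $l$, whose highest weight vector is $\det(X_{1,\dots,l})$; the $\GL_n$-orbit of products of such determinants then yields all the claimed generators. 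For the purposes of this paper, however, citing \cite[Theorem~4.4.4]{DerksenKemper} together with the elementary module isomorphism is entirely sufficient, and I would present the proof in that concise form, spelling out only the $n<l$ case and the $\M_{l,1}^n \cong \M_{l,n}$ identification explicitly.
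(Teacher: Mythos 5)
Your proposal is correct and matches the paper's treatment: the paper gives no independent proof of this proposition, simply quoting the first fundamental theorem for $\SL_l$ from \cite[Theorem~4.4.4]{DerksenKemper}, which is exactly the citation you rely on (your Cauchy-formula sketch is optional extra detail). The $n<l$ case and the identification $\M_{l,1}^n \cong \M_{l,n}$ you spell out are likewise treated as immediate in the paper.
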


Clearly the case $l=1$ is analogous to the above. In the case $l=m=2$, a minimal set of homogeneous generators is known:
\begin{prop}[Domokos \cite{DomokosPoincare}]\label{fft2}

The following is a minimal set of homogeneous generators for $\C[\M_{2,2}^n]^G$
\begin{itemize}
\item $\det(X_i): 1 \leq i \leq n$;
\item $\langle X_i | X_j \rangle:= \Tr(X_i)\Tr(X_j) - \Tr(X_iX_j): 1 \leq i<j \leq n$;
\item $\xi(X_i,X_j,X_k,X_l): 1 \leq i<j<k<l \leq n.$
\end{itemize}

Here $\xi(X_i,X_j,X_k,X_l)$ is the coefficient of $a_ia_ja_ka_l$ in the determinant
\[\begin{vmatrix} a_iX_i & a_jX_j\\ a_kX_k & a_lX_l \end{vmatrix} \in \C[\M_2^n][a_i,a_j,a_k,a_l].\]
\end{prop}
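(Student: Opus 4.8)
The plan is to reduce to the first fundamental theorem (FFT) for the orthogonal group, exploiting the exceptional isogeny $\SL_2(\C)\times\SL_2(\C)\twoheadrightarrow\mathrm{SO}_4(\C)$. Write $R=\C[\M_{2,2}^n]^G$ and let $R_+\subset R$ be the irrelevant ideal. First I would record that the three displayed families really are semi-invariants: $\det(gX_ih^{-1})=\det(X_i)$ since $\det g=\det h=1$; writing $\langle X_i|X_j\rangle=\Tr\!\bigl(X_i\operatorname{adj}(X_j)\bigr)$ and using $\operatorname{adj}(gX_jh^{-1})=h\operatorname{adj}(X_j)g^{-1}$ shows $X_i\operatorname{adj}(X_j)$ is conjugated by $g$; and the full block determinant $\det\!\begin{pmatrix}a_iX_i&a_jX_j\\ a_kX_k&a_lX_l\end{pmatrix}$ is multiplied by $\det(g)^2\det(h)^{-2}=1$ under $G$, so each of its coefficients as a polynomial in $a_i,\dots,a_l$ — in particular $\xi$ — lies in $R$.

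For generation, observe that $q(X):=\det(X)$ is a nondegenerate quadratic form on $\M_{2,2}\cong\C^4$ preserved by the $G$-action, giving $\phi\colon G\to\mathrm O(q)=\mathrm O_4(\C)$; the image is connected of dimension $6=\dim\mathrm{SO}_4$, hence equals $\mathrm{SO}_4(\C)$, and $\ker\phi=\{\pm(I,I)\}$ acts trivially on $\M_{2,2}^n$, so $R=\C[(\C^4)^{\oplus n}]^{\mathrm{SO}_4(\C)}$ with $\mathrm{SO}_4$ acting by $n$ copies of the standard representation. The FFT for the special orthogonal group (e.g.\ \cite{DerksenKemper}) says this ring is generated by the bilinear forms $\langle v_i,v_j\rangle$ for $i\le j$ together with the $4\times4$ volume forms $[v_{i_1},\dots,v_{i_4}]$ for $i_1<\dots<i_4$. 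Under the identification, $\langle v_i,v_i\rangle$ is a scalar multiple of $\det(X_i)$ and $\langle v_i,v_j\rangle$ $(i<j)$ is the polarization $\det(X_i+X_j)-\det(X_i)-\det(X_j)=\langle X_i|X_j\rangle$. For the volume forms, one checks that the only monomials in the $a$'s occurring in the block determinant are $a_i^2a_l^2$, $a_j^2a_k^2$ and $a_ia_ja_ka_l$, so that
\[ \det\!\begin{pmatrix}X_i&X_j\\ X_k&X_l\end{pmatrix}=\det(X_i)\det(X_l)+\det(X_j)\det(X_k)+\xi(X_i,X_j,X_k,X_l); \]
this exhibits $\xi(X_i,X_j,X_k,X_l)$ as homogeneous of multidegree $e_i+e_j+e_k+e_l$, and, comparing with the FFT description of the multidegree-$(e_i+e_j+e_k+e_l)$ part of the $G$-invariant function on the left, it equals $[v_i,v_j,v_k,v_l]$ up to a nonzero scalar modulo the subalgebra generated by the bilinear forms — the scalar being nonzero precisely because $\xi$ lies outside that subalgebra, as shown next. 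Hence the displayed set generates $R$.

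Minimality means the displayed generators have linearly independent images in $R_+/R_+^2$. Since $\M_{2,2}^n$ is a direct sum of copies of the irreducible nontrivial module $\C^4$, there are no invariant linear forms, so $R$ lives in even degrees and $R_+^2\subseteq R_{\ge4}$; in particular every degree-$2$ generator has nonzero image in $R_+/R_+^2$, and $\det(X_i),\langle X_i|X_j\rangle$ are visibly linearly independent as polynomials. The degree-$4$ part of $R_+^2$ is spanned by products of two degree-$2$ generators, each of which is $\mathrm O_4$-invariant (it is $q(v_i)$ or the bilinear form $\langle v_i,v_j\rangle$), so $R_+^2\cap R_4\subseteq\C[\M_{2,2}^n]^{\mathrm O_4(\C)}$. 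But $\xi$ is \emph{not} $\mathrm O_4$-invariant: the transpose $X\mapsto X^{\mathsf T}$ lies in $\mathrm O_4(\C)\smallsetminus\mathrm{SO}_4(\C)$ (its determinant on $\C^4$ is $-1$), and it changes $\xi$, since $\det\!\begin{pmatrix}X_i&X_j\\ X_k&X_l\end{pmatrix}$ and $\det\!\begin{pmatrix}X_i&X_k\\ X_j&X_l\end{pmatrix}$ differ for generic matrices. Hence $\xi(X_i,X_j,X_k,X_l)\notin R_+^2$. Finally every displayed generator is homogeneous in a pairwise distinct multidegree ($2e_i$, $e_i+e_j$, $e_i+e_j+e_k+e_l$ respectively), so the fact that each one individually avoids $R_+^2$ forces their images in $R_+/R_+^2$ to be linearly independent; thus no proper subset generates $R$.

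The one real obstacle — feeding into both the generation and the minimality step — is the assertion that $\xi$ genuinely involves the Levi-Civita/volume part, equivalently $\xi\notin\C[\M_{2,2}^n]^{\mathrm O_4(\C)}$; this is exactly the phenomenon distinguishing special-orthogonal from orthogonal invariant theory, and once granted, the rest is the classical orthogonal FFT/SFT together with elementary manipulations. (The result is of course due to Domokos \cite{DomokosPoincare}; alternatively it follows from the Derksen--Weyman and Schofield--Van den Bergh theory of semi-invariants of the $n$-arrow Kronecker quiver.)
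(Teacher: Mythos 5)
The paper does not prove this statement at all — it is quoted from Domokos's paper on the Poincar\'e series of $2\times 2$ matrix semi-invariants — so your argument has to stand on its own, and in essence it does: the reduction through the isogeny $\SL_2\times\SL_2\to \mathrm{SO}_4$ (kernel $\pm(I,I)$ acting trivially, image all of $\mathrm{SO}_4$ by connectedness and dimension count), the identification of $\det(X_i)$ and $\langle X_i|X_j\rangle$ with the quadratic form $q=\det$ and its polarization, the identity $\det\begin{pmatrix}X_i&X_j\\ X_k&X_l\end{pmatrix}=\det(X_i)\det(X_l)+\det(X_j)\det(X_k)+\xi$, and the graded Nakayama argument using distinct multidegrees are all correct, and the classical FFT for $\mathrm{SO}_4$ does the rest. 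What you leave ``granted'' is exactly the load-bearing claim, namely that the transpose involution (which lies in $\mathrm O_4\setminus\mathrm{SO}_4$) actually changes $\xi$, equivalently that $\det\begin{pmatrix}X_i&X_j\\ X_k&X_l\end{pmatrix}\neq\det\begin{pmatrix}X_i&X_k\\ X_j&X_l\end{pmatrix}$ as polynomials; without it you get neither $\alpha\neq 0$ in the bracket expansion (generation) nor $\xi\notin R_+^2$ (minimality). This is true and is closed by a one-line evaluation: taking $X_i=E_{11}$, $X_j=E_{21}$, $X_k=E_{12}$, $X_l=E_{22}$ (matrix units), the first block matrix is the permutation matrix of the transposition $(2\,3)$, so $\xi=-1$, while the second has a zero row, so $\xi=0$; note also that by your displayed identity the two block determinants differ exactly when the two $\xi$'s do, since the $\det\cdot\det$ terms agree. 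With that witness inserted, the proof is complete. Two cosmetic points: ``no invariant linear forms'' gives $R_1=0$ and hence $R_+^2\subseteq R_{\geq 4}$, which is all you use, but it does not by itself imply $R$ is concentrated in even degrees (for that, use that $-\operatorname{id}_{\C^4}\in\mathrm{SO}_4$, or $(-I,I)\in G$); and in the minimality step you should say explicitly that $R_2$ is spanned by the bilinear forms (a consequence of the FFT, since the brackets have degree $4$), which is what makes $(R_+^2)_4\subseteq\C[\M_{2,2}^n]^{\mathrm O_4}$.
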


We note that a finite set of generating invariants is also known in the case $l=m=3$, see \cite{Lopatin3x3}.

\subsection{Separating Invariants and Dimension}

Now consider a more general situation in which a linear algebraic group $G$ defined over $\kk$ acts linearly on an affine $\kk$-variety $\vv$. Let $\kv$ denote the algebra of polynomial functions on $\vv$. Then $G$ acts on $\kvg$ according to the formula
\begin{equation}\label{action} (g \cdot f) (v) = f(g^{-1} \cdot v).\end{equation} We denote by $\kvg$ the subalgebra of $\kv$ fixed by this action. Further define the \emph{Nullcone} of $G$ on $\vv$:
\begin{equation} \mathcal{N}_{G,\vv} = \{v \in\vv: f(v)=0 \ \text{for all} \ f \in \kvg\}.
\end{equation}

Assume that $G$ is reductive. Then we have the following characterisation of the Krull dimension of $\kvg$:

\begin{prop} Let $f_1, f_2, \ldots, f_d \in \kvg$. Then the following are equivalent:
\begin{enumerate}
\item $\kvg$ is finitely generated over $\kk[f_1,f_2, \ldots, f_d]$;
\item $\mathcal{N}_{G,\vv} = V(f_1,f_2, \ldots, f_d)$.
\end{enumerate}
Moreover, the minimum $d>0$ such that there exist $f_1, f_2, \ldots, f_d \in \kvg$ satisfying the above is equal to the Krull dimension of $\kvg$.
\end{prop}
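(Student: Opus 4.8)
The plan is to recognise this as the classical characterisation of a homogeneous system of parameters and to prove it by passing to the affine quotient $\vg$, where both conditions collapse into a single statement about a finite graded ring extension. I read ``finitely generated over $\kk[f_1,\dots,f_d]$'' as \emph{module}-finite, the algebra version being automatic, and I take $\kk$ algebraically closed, as in the intended application $\kk=\C$. Since $G$ is reductive, $A:=\kvg$ is a finitely generated graded $\kk$-algebra with $A_0=\kk$ (Hilbert--Nagata); the irrelevant ideal $\kvgplus$ is a maximal ideal of $A$, hence a point $\bar 0$ of $X:=\vg=\spec A$; and the quotient morphism $\pi\colon\vv\to X$ is surjective (a standard consequence of reductivity, via the Reynolds operator).

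First I would set up the geometric dictionary. For $v\in\vv$, the ideal $\{f\in A: f(v)=0\}=\mathfrak m_v\cap A$ is the maximal ideal of $A$ at $\pi(v)$, so $\kvgplus\subseteq\mathfrak m_v$ forces $\kvgplus=\mathfrak m_v\cap A$; that is, $\NGV=\pi^{-1}(\bar 0)$. Also, each $f_i$ is $G$-invariant and therefore factors through $\pi$, so $V(f_1,\dots,f_d)=\pi^{-1}(Z)$, where $Z\subseteq X$ is the common zero set of $f_1,\dots,f_d$ on $X$. Since $\pi$ is surjective, $\pi^{-1}(Z)=\pi^{-1}(\bar 0)$ if and only if $Z=\{\bar 0\}$, so condition (2) is equivalent to $Z=\{\bar 0\}$. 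In particular $f_i(\bar 0)=0$ for all $i$, so we may assume each $f_i$ homogeneous of positive degree; write $B:=\kk[f_1,\dots,f_d]\subseteq A$, a graded subalgebra.

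Now the proposition reduces to a purely algebraic equivalence: \emph{$A$ is module-finite over $B$ if and only if $A/(f_1,\dots,f_d)A$ is a finite-dimensional $\kk$-vector space} --- the latter being precisely the condition that $Z$ be a single (graded) point. The forward implication is immediate, since $A/(f_1,\dots,f_d)A$ is then a finitely generated module over $B/(f_1,\dots,f_d)B=\kk$. For the converse I would run the graded Nakayama argument: choose homogeneous $a_1,\dots,a_s\in A$ whose residues span $A/(f_1,\dots,f_d)A$ over $\kk$, and show by induction on degree that they generate $A$ as a $B$-module --- if $a\in A$ is homogeneous of minimal degree not lying in $\sum_j Ba_j$, then $a=\sum_j c_ja_j+\sum_i f_ib_i$ with $c_j\in\kk$ and the $b_i$ homogeneous of strictly smaller degree (as $\deg f_i>0$), and applying the inductive hypothesis to the $b_i$ yields a contradiction. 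This proves (1)$\iff$(2). For the ``moreover'': if $A$ is module-finite over $\kk[f_1,\dots,f_d]$ then $\dim\kvg=\dim A=\dim\kk[f_1,\dots,f_d]\le d$, so any $d$ realising the conditions satisfies $d\ge\dim\kvg$; conversely graded Noether normalisation supplies homogeneous, algebraically independent $h_1,\dots,h_{\dim A}\in A$ over which $A$ is finite, attaining the bound --- hence the minimum is $\dim\kvg$.

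The geometric dictionary and the dimension count are routine; the step requiring the most care is the converse in the algebraic equivalence, which genuinely uses the grading on $\kvg$ --- a $0$-dimensional quotient does not by itself force module-finiteness for a non-graded extension (already $\kk[t]$ is finite over $\kk[t^2-t]$ even though $t^2-t$ cuts out two points rather than one). One should also keep in mind that the dictionary relies on both surjectivity of $\pi$ --- a substantive input from reductivity --- and the Nullstellensatz identification of closed points with maximal ideals of residue field $\kk$.
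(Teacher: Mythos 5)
The paper states this proposition without proof --- it is the classical criterion underlying homogeneous systems of parameters, quoted as known --- so there is no in-text argument to compare with. Your overall route is the standard one and most of it is correct: pass to $X=\spec\kvg$ (finitely generated since $G$ is reductive), identify $\NGV$ with the fibre of $\pi$ over the vertex $\kvgplus$, use surjectivity of $\pi$ to translate condition (2) into ``the $f_i$ cut out the vertex in $X$'', prove the resulting algebraic equivalence by graded Nakayama, and get the ``moreover'' from $\dim\kk[f_1,\dots,f_d]\le d$ together with graded Noether normalisation. The dictionary, the Nakayama induction and the dimension count are all fine.

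The step that fails is the sentence ``$f_i(\bar 0)=0$ for all $i$, so we may assume each $f_i$ homogeneous of positive degree.'' Vanishing at the vertex only kills the constant terms; it does not make the $f_i$ homogeneous, and no reduction to the homogeneous case is possible, because for inhomogeneous invariants the stated equivalence is genuinely false --- the phenomenon you flag in your closing paragraph is not a side remark, it is exactly what bites here. Concretely, take $G$ trivial (which is reductive): on $\vv=\kk^2$ let $f_1=x$, $f_2=y-xy$; then $V(f_1,f_2)=\{0\}=\NGV$, yet $\kk[x,y]$ is not a finite module over $B=\kk[x,\,y-xy]$, since otherwise $\kk[x,y]/(f_1-1,f_2)\kk[x,y]\cong\kk[x,y]/(x-1,\,y(1-x))\cong\kk[y]$ would have to be finite-dimensional over $B/(f_1-1,f_2)B\cong\kk$ (the $f_i$ are algebraically independent, so this last quotient really is $\kk$). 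In the other direction, on $\vv=\kk$ the choice $f_1=t-1$ satisfies (1) trivially while $V(f_1)=\{1\}\neq\NGV$. The same issue undercuts your pivot claim that $\dim_\kk A/(f_1,\dots,f_d)A<\infty$ ``is precisely the condition that $Z$ be a single (graded) point'': without homogeneity, finiteness of the quotient only forces $Z$ to be finite, and $Z$ need not contain $\bar 0$ at all. So the proposition has to be read with the $f_i$ homogeneous of positive degree --- which is what the surrounding discussion of zero-separating sets and homogeneous systems of parameters intends --- and the honest fix is to state that hypothesis at the outset of your proof rather than to claim a reduction; with it, everything else you wrote goes through.
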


A set $f_1, f_2, \ldots, f_d \in \kvg$ with the properties above is sometimes called a \emph{zero-separating set}, since for any $v \in \vv$ we have that if there exists $f \in \kvg$ with $f(v) \neq 0$ then $f_i(v) \neq 0$ for some $1 \leq i \leq d$. A zero-separating set with minimum cardinality (equivalently, an algebraically independent zero-separating set) consisting of homogeneous polynomials is called a \emph{homogeneous system of parameters}.

Suppose further that there exists no non-trivial character $G \rightarrow \C^*$. Then we have
\begin{equation}\label{krulldim}
\dim(\kvg) = \dim(\vv)-\dim(G)+\min_{v \in \vv} \dim(G_v),
\end{equation}
see \cite[Corollary~2.3]{Elmer2x2} for proof.

\begin{eg} Consider the situation of Proposition \ref{fftsln} and suppose $n \geq l$. Then we have
\[\dim(\kvg) = ln - l^2 + 1.\]
It follows that one may find a homogeneous system of parameters for this algebra of invariants with cardinality $ln-l^2+1$. In fact, since the invariants $\det(X_{k_1,\ldots,k_l})$ all have the same degree, it follows from the proof of the Noether Normalisation Lemma (see \cite[Lemma~2.4.7]{DerksenKemper} that one may find a homogeneous system of parameters with this cardinality consisting of linear combinations of these $\det(X_{k_1,\ldots,k_l})$.
\end{eg}

\begin{eg}The dimension of $\C[\M^n_{2,2}]^G$ for $n\geq 3$ is $\dim(\M_2^n) - \dim(G) = 4n-6$. This follows from Equation \eqref{krulldim} because there exist $3$-matrices whose stabiliser in $G$ is the finite group $(\pm I, \pm I)$. Contrastingly, $\dim (\C[\M^2_{2,2}]^G) = 8-6+1=3$, since every $2$-matrix has at least a 1-dimensional stabiliser, and $\dim(\C[\M_{2,2}]^G) = 4-6+3=1$ since the stabiliser of any matrix has dimension at least 3.
\end{eg}

If $f \in \kvg$ and $f(v) \neq f(w)$ we say that $f$ {\it separates} $v$ and $w$. We say that $v$ and $w$ are {\it separated by invariants} if there exists an invariant separating $v$ and $w$.  In case $G$ is reductive, we have that $f(v) = f(w)$ for all $f \in \kvg$ if and only if $\overline{Gv} \cap \overline{Gw} \neq \emptyset$ where the bar denotes closure in the Zariski  topology, see  \cite[Corollary~6.1]{Dolgachev}. In particular, the invariants separate the orbits if $G$ is a finite group.

One can in principle separate orbits whenever one can find an explicit generating set for $\kvg$, but this is an extremely difficult problem in general. For this reason, Derksen and Kemper introduced the following in 2002 \cite[Definition~2.3.8]{DerksenKemper}:

\begin{Def}\label{sepdef}
Let $S \subseteq \kvg$. We say $S$ is a {\it separating set} for $\kvg$ if the following holds for all $v, w \in \vv$:
\begin{equation*} s(v) = s(w) \ \text{for all} \ s \in S \Leftrightarrow f(v) = f(w)  \ \text{for all} \ f \in \kvg. \end{equation*}
\end{Def}

Separating sets of invariants have been an area of much recent interest. In general they have nicer properties and are easier to construct than generating sets. For example, if $G$ is a finite group acting on a vector space $V$, then the set of invariants of degree $\leq |G|$ is a separating set \cite[Theorem~3.9.14]{DerksenKemper}. This is also true for generating invariants if $\chr(\kk) = 0$ \cite{FleischmannNoetherBound}, \cite{FogartyNoetherBound} but fails for generating invariants in the modular case. Separating sets for the rings of invariants $\kk[V]^{C_p}$, where $\kk$ is a field of characteristic $p$ and $C_p$ the cyclic group of order $p$ and $V$ is indecomposable were constructed in \cite{SezerCyclic}. Corresponding sets of generating invariants are known only when $\dim(V) \leq 10$ \cite{WehlauCyclicViaClassical}. For the (non-reductive) linear algebraic group $\Ga$ of a field of characteristic zero, separating sets for $\kk[V]^{\Ga}$ for arbitary indecomposable linear representations $V$ were constructed in \cite{ElmerKohls}. These results were extended to decomposable representations in \cite{DufresneElmerSezer}. Even for indecomposable representations, generating sets are known only where $\dim(V) \leq 8$ \cite{Bedratyuk7}.  Finally, for an arbitrary (i.e. non-linear) $\Ga$-variety $\vv$, the algebra of invariants $\kk[\vv]^{\Ga}$ may not be finitely generated, but it is known that there must exist a finite separating set \cite{KemperSeparating} and finite separating sets have been constructed for many examples where $\kk[\vv]^{\Ga}$ is infinitely generated \cite{DufresneKohlsFiniteSep, DufresneKohlsSepVar}. 

Evidently a separating set must be a zero-separating set. Therefore, if $G$ is reductive, the Krull dimension of $\kvg$ is a lower bound for the size of any separating set. In fact this remains true for arbitrary linear algebraic groups. Let $S$ be a separating set for $\kvg$ consisting of homogeneous polynomials. The subalgebra $\kk[S]$ of $\kvg$ generated by $S$ is called a {\it separating algebra}. By \cite[Proposition~3.2.3]{DufresnePhD}, the quotient fields of $\kk[S]$ and $\kvg$ have the same transcendence degree over $\kk$. Then by \cite[Proposition~2.3(b)]{Giral} we get that $\dim(\kk[S]) = \dim(\kvg)$. Consequently, the size of a separating set is bounded below by the dimension of $\kvg$. 

A separating set whose size equals the dimension of $\kvg$ is sometimes called a polynomial separating set, because it necessarily generates a polynomial subalgebra of $\kvg$. On the other hand, there always exists a separating set of size $\leq 2\dim(\kvg)+1$, albeit such a separating set may necessarily contain non-homogeneous polynomials; see \cite[Theorem~5.3]{KamkeKemper} for a proof. In the present article we shall say a separating set $S$ is \emph{minimal} if no proper subset of $S$ is separating; and \emph{minimum} if it has the smallest cardinality among all separating sets.

\subsection{The separating variety}

The main tool in our proofs will be the separating variety. This was introduced by Kemper in \cite{KemperCompRed}:

\begin{Def}\[ \mathcal{S}_{G,\vv} = \{(v, w) \in \vv^2: f(v)= f(w) \ \text{for all} \ f \in \kvg \} .\]
\end{Def}

In other words, the separating variety is the subvariety of $\vv^2$ consisting of pairs of points which are not separated by any invariant.  More scheme-theoretically, one may equivalently define
\begin{equation}\label{sepscheme} \mathcal{S}_{G,\vv} = (\vv \times_{\vv/G} \vv)_{\text{red}},
\end{equation}
i.e. $\mathcal{S}_{G,\vv}$ is the unique reduced scheme whose underlying variety is the fibre product $(\vv \times_{\vv/G} \vv)$. Given this observation, one might well expect that
\[\dim(S_{G,\vv}) = 2 \dim(\vv) - \dim(\kvg).\]
This is not the case in general however, as we will see in the next section.

We define $I_{G,\vv}$ to be the ideal of $\kk[\vv^2]$ consisting of the polynomial functions which vanish on $\mathcal{S}_{G,\vv}$. Clearly this is a radical ideal. Then a separating set can be characterised as a subset $S \subseteq \kvg$ which cuts out the separating variety in $\vv^2$, in other words (see \cite[Theorem~2.1]{DufresneSeparating}):

\begin{prop}\label{radversion}
$S \subseteq \kk[\vv]^G$ is a separating set if and only if 
\[V_{\vv^2}(\delta(S)) = \mathcal{S}_{G,\vv}.\]
\end{prop}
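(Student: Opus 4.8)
The plan is to prove this by unwinding the definitions, observing that both the condition ``$S$ is a separating set'' and the set-theoretic equality $V_{\vv^2}(\delta(S)) = \mathcal{S}_{G,\vv}$ split into a pair of implications (resp. inclusions), exactly one of which is automatic because $S \subseteq \kvg$.

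First I would record that the separating variety is itself a vanishing locus of this shape. Recalling that $\delta(f) \in \kk[\vv^2]$ is the function $(v,w) \mapsto f(v) - f(w)$ and that $\mathcal{S}_{G,\vv} = \{(v,w) \in \vv^2 : f(v) = f(w) \text{ for all } f \in \kvg\}$, we have $\mathcal{S}_{G,\vv} = V_{\vv^2}(\delta(\kvg))$. Since $S \subseteq \kvg$ we get $\delta(S) \subseteq \delta(\kvg)$, and therefore
\[
V_{\vv^2}(\delta(S)) \supseteq V_{\vv^2}(\delta(\kvg)) = \mathcal{S}_{G,\vv}.
\]
This inclusion holds for \emph{every} subset $S \subseteq \kvg$, so it is the ``free'' half of the desired equality.

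Next I would note that, again because $S \subseteq \kvg$, the implication ``$f(v) = f(w)$ for all $f \in \kvg$ $\Rightarrow$ $s(v) = s(w)$ for all $s \in S$'' in Definition \ref{sepdef} is automatic. Hence $S$ is a separating set if and only if the reverse implication holds: for all $v,w \in \vv$, if $s(v) = s(w)$ for all $s \in S$ then $f(v) = f(w)$ for all $f \in \kvg$. The hypothesis of this implication says exactly that $(v,w) \in V_{\vv^2}(\delta(S))$, and the conclusion says exactly that $(v,w) \in \mathcal{S}_{G,\vv}$; so $S$ is a separating set if and only if $V_{\vv^2}(\delta(S)) \subseteq \mathcal{S}_{G,\vv}$. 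Combining this with the always-valid inclusion established above yields $V_{\vv^2}(\delta(S)) = \mathcal{S}_{G,\vv}$, and conversely this equality gives the needed inclusion. This argument is purely formal, so there is no substantial obstacle; the only point meriting a moment's attention is the observation that one of the two implications defining a separating set is vacuously true, which is what collapses the set equality to a single inclusion. (Compare \cite[Theorem~2.1]{DufresneSeparating}.)
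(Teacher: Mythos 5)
Your proof is correct; the paper itself gives no proof of this proposition, instead citing \cite[Theorem~2.1]{DufresneSeparating}, and your argument is the standard purely formal one: the inclusion $V_{\vv^2}(\delta(S)) \supseteq \mathcal{S}_{G,\vv}$ is automatic from $S \subseteq \kvg$, and the separating condition is precisely the reverse inclusion. (The only cosmetic remark is that with the paper's convention $\delta(f) = 1 \otimes f - f \otimes 1$ one has $\delta(f)(v,w) = f(w) - f(v)$ rather than $f(v)-f(w)$, which of course changes nothing about the vanishing locus.)
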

where  $\delta: \kk[\vv] \rightarrow \kk[\vv^2] = \kk[\vv] \otimes \kk[\vv]$ is defined by
\[\delta(f) = 1 \otimes f - f \otimes 1.\] 

Equivalently, via the Nullstellensatz, $S$ is a separating if and only if
\[\sqrt{(\delta(S)} = {I_{G,\vv}}.\]
Consequently the size of a separating set in $\kvg$ is bounded below by the minimum number of generators of $I_{G,\vv}$ up to radical, that is, the minimum number of elements generating any ideal whose radical is $I_{G,\vv}$ (this is sometimes called the {\it arithmetic rank} of $I_{G,\vv}$). We then find, using Krull's height theorem, (see e.g. \cite[Theorem~10.2]{Eisenbud}) that:
\begin{prop}\label{krullbound}
Let $S \subseteq \kk[\vv]^G$ by a separating set. Then $|S| \geq \codim_{\vv^2}(\cc)$ for all irreducible components $\cc$ of $\mathcal{S}_{G,\vv}$.
\end{prop}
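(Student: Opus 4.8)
The plan is to read off the conclusion from Proposition~\ref{radversion} together with Krull's height theorem (\cite[Theorem~10.2]{Eisenbud}). First, since $S$ is a separating set, Proposition~\ref{radversion} gives $V_{\vv^2}(\delta(S)) = \mathcal{S}_{G,\vv}$; equivalently, the ideal $J := (\delta(s) : s \in S)$ of $\kk[\vv^2]$ satisfies $\sqrt{J} = I_{G,\vv}$. In particular $J$ is an ideal generated by at most $|S|$ elements whose vanishing locus in $\vv^2$ is exactly $\mathcal{S}_{G,\vv}$.

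Next I would invoke Noetherianity: $\kk[\vv^2]$ is a finitely generated $\kk$-algebra, so the irreducible components of $V_{\vv^2}(J) = \mathcal{S}_{G,\vv}$ are precisely the closed sets $V_{\vv^2}(\mathfrak{p})$ attached to the minimal primes $\mathfrak{p}$ of $\kk[\vv^2]$ over $J$. By Krull's height theorem, each such $\mathfrak{p}$ — being minimal over an ideal generated by at most $|S|$ elements — has $\height(\mathfrak{p}) \leq |S|$. Finally, because $\vv^2$ is a variety over a field, its coordinate ring is catenary and equidimensional, so for the irreducible closed subset $\cc = V_{\vv^2}(\mathfrak{p})$ one has $\codim_{\vv^2}(\cc) = \height(\mathfrak{p})$. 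Chaining these together yields $\codim_{\vv^2}(\cc) \leq |S|$ for every irreducible component $\cc$ of $\mathcal{S}_{G,\vv}$, as claimed.

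I do not expect a genuine obstacle: the statement is essentially a repackaging of Krull's height theorem through the dictionary between "cutting out $\mathcal{S}_{G,\vv}$ set-theoretically" and "minimal primes over $(\delta(S))$". The only point that needs a word of care is the identification of the height of a minimal prime with the codimension of the corresponding subvariety of $\vv^2$, which is where the hypothesis that $\vv$ (hence $\vv^2$) is an honest affine variety over $\kk$ is used; for a finitely generated domain over a field this equality is standard, so no further argument is required in the setting of this paper.
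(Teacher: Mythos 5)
Your argument is correct and is essentially the paper's own: the paper derives this proposition exactly by noting that $\sqrt{(\delta(S))}=I_{G,\vv}$ bounds the arithmetic rank of $I_{G,\vv}$ by $|S|$ and then citing Krull's height theorem \cite[Theorem~10.2]{Eisenbud}, which is precisely your chain "minimal primes over $(\delta(S))$ have height $\leq |S|$, and height equals codimension of the corresponding component." Your added remark about catenarity/equidimensionality just makes explicit the standard height-equals-codimension identification that the paper leaves implicit.
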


Therefore, in order to use Proposition \ref{krullbound} above to find lower bounds for separating sets, we must decompose $\mathcal{S}_{G,\vv}$ into irreducible components. As a first step, we observe that the separating variety contains the following subvariety, which we call the {\it graph} of the action:
\begin{Def} \[\Gamma_{G,\vv} = \{(v,gv): v \in \vv, g \in G \}.\]
\end{Def}
If $G$ is connected and reductive, then $\overline{\Gamma_{G,\vv}}$ is an irreducible component of $\mathcal{S}_{G,\vv}$. Its dimension is easily seen to equal $\dim(\vv) + \dim(G) - \min\{\dim(G_v): v \in \vv\}$, and we note that in case there is no nontrivial character $G \rightarrow \C^*$ that
\begin{equation}\label{graphdim} \dim(\overline{\Gamma_{G,\vv}}) = 2 \dim(\vv) - \dim(\kvg). \end{equation}
In particular this implies $\dim(S_{G,\vv}) \geq  2 \dim(\vv) - \dim(\kvg),$ but we do not have equality in general. Furthermore,  the separating variety may have extra components of smaller dimension. These components are an obstruction to the existence of small separating sets.

We may obtain a first step towards decomposing $\mathcal{S}_{G,\vv}$ as follows: notice that if $(v,v') \in \mathcal{S}_{G,\vv}$ then we have $(v,v') \in \overline{\Gamma_{G,\vv}}$ unless neither $G \cdot v$ nor $G \cdot v'$ is closed. Now we recall the {\it Hilbert-Mumford criterion} \cite{Mumford}: given $v \in \vv$ and a 1-parameter subgroup $\lambda: \C^* \rightarrow G$ we define $w(\lambda, v)$ to be the unique integer $d$ such that 
\[\lim_{t \rightarrow 0} \lambda(t) \cdot t^d v \]
exists and is non-zero. $v \in \vv$ is called {\it stable} if $w(\lambda, v)>0$ for all 1-parameter subgroups $\lambda: \C^* \rightarrow G$, and  {\it semi-stable} if $w(\lambda, v) \geq 0$ for all 1-parameter subgroups $\lambda: \C^* \rightarrow G$. A point $v \in\vv$ is stable if and only if $G \cdot v$ is closed and the dimension of $\overline{G \cdot v}$ is maximal. The set $\vv_s$ of stable points forms an open subset of $\vv$. A point is called {\it non-stable} if it is not stable (the more natural term unstable usually means not semi-stable). It follows that the set $\uu:= \vv \setminus \vv_s$ of non-stable points forms a closed subset of $\vv$. We thus obtain a decomposition

\begin{equation}\label{firstdecomp} \mathcal{S}_{G,\vv} = \overline{\Gamma_{G,\vv}} \cup \mathcal{S}_{G,\vv,\uu}
\end{equation}
where $\mathcal{S}_{G,\vv,\uu}:= \uu^2 \cap \mathcal{S}_{G,\vv}$. Since both $\uu^2$ and $\mathcal{S}_{G,\vv}$ are closed irreducible subsets of $\vv^2$ it follows that $\mathcal{S}_{G,\vv,\uu}$ is closed. It may or may not be irreducible; this depends on the action of $G$ on $\vv$.

Stronger obstructions may be obtained by taking a closer look at the geometry of $\mathcal{S}_{G,\vv}$. Recall that a Noetherian toplogical space $\vv$ is said to be {\it connected in dimension $k$} if the following holds: for each closed subvariety $Z \subseteq \vv$ with dimension $< k$, the complement $\vv \setminus Z$ is connected. If the same holds for all $Z \subseteq \vv$ with $\codim_{\vv}(Z)  > k$, we say that $\vv$ is {\it connected in codimension $k$}. Note that if $\vv$  is equidimensional, or all irreducible components of $\vv$ intersect nontrivially then we have $\dim(Z) = \dim(\vv)- \codim_{\vv}(Z)$; consequently $\vv$ is connected in dimension $k$ if and only if it is connected in codimension $\dim(\vv)-k$. The following result was proved is \cite{Elmer2x2}:

\begin{prop}\label{grothbound} Suppose $\mathcal{S}_{G,\vv}$ is not connected in codimension $k$, and let $S \subseteq \kvg$ be a separating set. Suppose further that all irreducible components of $\mathcal{S}_{G,\vv}$ intersect nontrivially, and that there does not exist a non-trivial character $G \rightarrow \kk^*$. Then $|S| \geq \dim(\kvg) + k$.
\end{prop}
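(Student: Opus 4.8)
The plan is to restate the claimed bound as a lower bound on the arithmetic rank of the ideal $I_{G,\vv}$ of the separating variety, and then to invoke a connectedness theorem of Grothendieck type in the form: if $V(J)$ is not connected in codimension $k$, then $J$ cannot be generated up to radical by fewer than $\height(J)+k$ elements. By Proposition \ref{radversion}, a separating set $S$ satisfies $\sqrt{(\delta(S))}=I_{G,\vv}$, so $I_{G,\vv}$ is generated up to radical by the $|S|$ functions $\delta(s)$, $s\in S$; equivalently $\mathcal{S}_{G,\vv}$ is cut out set-theoretically in $\vv^2$ by $|S|$ polynomial functions, and $\operatorname{ara}(I_{G,\vv})\le |S|$. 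Thus it suffices to show $\operatorname{ara}(I_{G,\vv})\ge\dim(\kvg)+k$. As a preliminary we record that $\height(I_{G,\vv})=\dim(\kvg)$: the component $\overline{\Gamma_{G,\vv}}$ has codimension $2\dim(\vv)-\dim(\overline{\Gamma_{G,\vv}})=\dim(\kvg)$ in $\vv^2$ by \eqref{graphdim} (here the absence of nontrivial characters is used), and the hypothesis that all components of $\mathcal{S}_{G,\vv}$ intersect nontrivially makes codimension behave dimension-complementarily, so that $\overline{\Gamma_{G,\vv}}$ is a component of smallest codimension.

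The key structural observation is that $\mathcal{S}_{G,\vv}$ is a cone. Since $\kvg$ is spanned by homogeneous invariants, and $f(v)=f(w)$ implies $f(tv)=t^{\deg f}f(v)=t^{\deg f}f(w)=f(tw)$ for homogeneous $f$, the variety $\mathcal{S}_{G,\vv}$ is stable under the diagonal $\Gm$-action $(v,w)\mapsto(tv,tw)$ on $\vv^2$; hence $I_{G,\vv}$ is a homogeneous ideal and the origin $0=(0,0)$ lies on every irreducible component. This is essential, because Grothendieck-type connectedness statements fail for arbitrary affine varieties (a single equation can cut out a disconnected hypersurface in affine space); passing to the completion of $\kk[\vv^2]$ at the vertex of this cone --- a quotient of the complete regular local ring $\kk[[x_1,\dots,x_{2\dim(\vv)}]]$ --- or equivalently to the projectivised cone in $\mathbb{P}^{2\dim(\vv)-1}$, restores the completeness/properness these theorems require.

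Now assume $\mathcal{S}_{G,\vv}$ is not connected in codimension $k$ and fix a closed $Z\subseteq\mathcal{S}_{G,\vv}$ with $\codim_{\mathcal{S}_{G,\vv}}(Z)>k$ and $\mathcal{S}_{G,\vv}\setminus Z$ disconnected. First one checks that $0\in Z$: otherwise, $Z$ being proper and $\mathcal{S}_{G,\vv}$ a cone, every point $p$ can be joined to $0$ inside $\mathcal{S}_{G,\vv}\setminus Z$ along the line it spans (which meets $Z$ in a finite set containing neither $p$ nor $0$), forcing $\mathcal{S}_{G,\vv}\setminus Z$ to be connected. Since $0\in Z$, the disconnection persists on the punctured spectrum of the complete local ring at the vertex; using the hypothesis on components (so that "$\codim>k$" really means "dimension $<\dim(\mathcal{S}_{G,\vv})-k$"), the appropriate Grothendieck--Faltings bound gives $\operatorname{ara}(I_{G,\vv})\ge\height(I_{G,\vv})+k=\dim(\kvg)+k$. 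Combined with the first paragraph this yields $|S|\ge\dim(\kvg)+k$.

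The main obstacle is this last step: selecting and correctly applying the right incarnation of Grothendieck's connectedness theorem. One route is via local cohomology --- a Mayer--Vietoris sequence for the covering of $\mathcal{S}_{G,\vv}$ by the two halves of the disconnection, together with a Hartshorne--Lichtenbaum-type vanishing, produces a nonzero module $H^{\height(I_{G,\vv})+k}_{I_{G,\vv}}(\kk[\vv^2])$, whence the arithmetic rank bound; another is to track the connectedness dimension through successive hyperplane sections of the projectivised cone. Either way, one must handle carefully the bookkeeping between connectedness in codimension for the (possibly non-equidimensional) variety $\mathcal{S}_{G,\vv}$ and the codimension of the distinguished component $\overline{\Gamma_{G,\vv}}$; the remaining steps are essentially formal given Proposition \ref{radversion}, Proposition \ref{krullbound}, and the dimension formula \eqref{graphdim}.
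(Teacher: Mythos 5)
Your overall route---bounding $|S|$ from below by the number of elements needed to cut out $\mathcal{S}_{G,\vv}$ set-theoretically via Proposition \ref{radversion}, exploiting the cone structure of $\mathcal{S}_{G,\vv}$ to pass to the complete local ring at the vertex (your argument that any disconnecting set $Z$ must contain the origin is fine), and then invoking a Grothendieck-type connectedness bound---is exactly the kind of argument the label of this proposition points to, and those preliminary steps are sound. The gap is the identification $\height(I_{G,\vv})=\dim(\kvg)$. The height of the radical ideal $I_{G,\vv}$ is the minimum of the heights of its minimal primes, i.e.\ $2\dim(\vv)-\dim(\mathcal{S}_{G,\vv})$, the codimension in $\vv^2$ of a component of \emph{maximal} dimension; this equals $\dim(\kvg)=\codim_{\vv^2}(\overline{\Gamma_{G,\vv}})$ only when the graph closure is such a component. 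The hypothesis that all irreducible components of $\mathcal{S}_{G,\vv}$ intersect nontrivially does not force this: it only licenses the bookkeeping $\dim(Z)=\dim(\mathcal{S}_{G,\vv})-\codim_{\mathcal{S}_{G,\vv}}(Z)$ for the disconnecting subset. The paper itself supplies cases where the hypotheses of the proposition hold but your identification fails: by Theorem \ref{sepvar1l}(a) and the remark following it, for $l\geq 3$ and $n\geq l+2$ all components of $\mathcal{S}_{G,\vv}$ pass through the origin, yet $\dim(\mathcal{S}_{G,\vv})=2(l-1)(n+1)>2\dim(\vv)-\dim(\kvg)$, so that $\height(I_{G,\vv})<\dim(\kvg)$.

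Consequently what your argument actually proves is $|S|\geq \bigl(2\dim(\vv)-\dim(\mathcal{S}_{G,\vv})\bigr)+k=\height(I_{G,\vv})+k$: the connectedness theorem gives that $\mathcal{S}_{G,\vv}$ is connected in dimension $2\dim(\vv)-|S|-1$, while a disconnecting $Z$ with $\codim_{\mathcal{S}_{G,\vv}}(Z)>k$ has $\dim(Z)\leq\dim(\mathcal{S}_{G,\vv})-k-1$, whence the inequality. This coincides with the claimed bound precisely when $\overline{\Gamma_{G,\vv}}$ has maximal dimension among the components (as in the $2\times 2$ semi-invariant case, where the other components have dimension $4n+5<4n+6$), but it is strictly weaker otherwise: in the $m=1$, $l=3$ application it yields only $3n-8=\dim(\kvg)$ rather than the $4n-12$ needed for Theorem \ref{main1l}. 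To obtain $\dim(\kvg)+k$ in the generality stated, you would need an argument that adds $k$ on top of the codimension of the \emph{graph} component rather than on top of $\height(I_{G,\vv})$---for instance a Mayer--Vietoris computation in local cohomology aimed at showing $H^{\dim(\kvg)+k}_{I_{G,\vv}}(\kk[\vv^2])\neq 0$, which in turn requires vanishing statements for the local cohomology supported on the other, lower-codimension components that your sketch does not address. As written, the final step of the proposal does not close this gap.
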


\subsection{Statement of results}
 
We return to the notation and setting of section 1.1. Our goal is to desribe the separating variety and use this description to find a lower bound for the size of a separating set. We first consider the case $m=1$:

\begin{Theorem} 
\label{main1l} Let $S \subseteq \C[\M_{l,1}^n]^G$ be a separating set, and suppose $l \leq n$.  Then $|S| \geq (2l-2)n - 2(l^2-l)$.
\end{Theorem}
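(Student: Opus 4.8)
The plan is to apply Proposition \ref{krullbound}: it suffices to exhibit an irreducible component $\cc$ of $\mathcal{S}_{G,\vv}$ with $\vv = \M_{l,1}^n \cong \M_{l,n}$ and $\codim_{\vv^2}(\cc) \geq (2l-2)n - 2(l^2-l)$. Recall from Proposition \ref{fftsln} that $\kvg$ is generated by the $l \times l$ minors $p_I := \det(X_I)$ indexed by $l$-subsets $I \subseteq \{1,\dots,n\}$, so a pair $(\bA,\bB) \in \vv^2$ lies in $\mathcal{S}_{G,\vv}$ precisely when $p_I(\bA) = p_I(\bB)$ for all $I$. The decomposition \eqref{firstdecomp} reads $\mathcal{S}_{G,\vv} = \overline{\Gamma_{G,\vv}} \cup \mathcal{S}_{G,\vv,\uu}$, where $\uu$ is the non-stable locus. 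A matrix in $\M_{l,n}$ is non-stable exactly when it has rank $< l$ (equivalently all $p_I$ vanish, i.e. it lies in the nullcone $\NGV$), so $\uu = \NGV$ and $\mathcal{S}_{G,\vv,\uu} = \NGV \times \NGV$: any two nullcone points are identified by every invariant. Thus the candidate component is $\cc := \NGV \times \NGV$.

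Next I would compute $\dim(\NGV)$. The nullcone here is the determinantal variety of $l \times n$ matrices of rank $\leq l-1$; its dimension is the classical formula $\dim(\NGV) = n(l-1) - (l-1)^2 \cdot 0$... more carefully, the variety of $l \times n$ matrices of rank $\leq r$ has dimension $r(l + n - r)$, so with $r = l-1$ we get $\dim(\NGV) = (l-1)(l + n - (l-1)) = (l-1)(n+1)$. Hence $\dim(\cc) = 2(l-1)(n+1)$ and $\codim_{\vv^2}(\cc) = 2ln - 2(l-1)(n+1) = 2ln - 2(l-1)n - 2(l-1) = 2n - 2(l-1)$. Wait — this gives only $2n - 2l + 2$, which is far below the claimed bound, so $\cc = \NGV^2$ is \emph{not} the right component: it has small codimension because it is large. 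The correct strategy must instead find a \emph{small} component of $\mathcal{S}_{G,\vv}$ lying inside (or meeting) the non-stable locus but cut out by extra vanishing conditions — the genuine obstruction components, not the full $\uu^2$.

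So the refined plan is: inside $\mathcal{S}_{G,\vv,\uu} = \NGV \times \NGV$ the separating condition is automatic, meaning $\NGV^2$ really is a component; but to get a large lower bound one must look elsewhere. The component to target is built from pairs $(\bA,\bB)$ where $\bA$ has rank exactly $l$ but shares a common row space / degenerate configuration with $\bB$, arranged so that all minors agree trivially — concretely, fix a coordinate hyperplane structure: let $\cc'$ consist of pairs where columns $l+1,\dots,n$ of both $\bA$ and $\bB$ are zero and the first $l$ columns are \emph{related}... Actually the cleaner approach, following the philosophy of \cite{Elmer2x2} and \cite{DomokosSemi}, is to produce a component supported where only one matrix is non-stable. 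If $\bA$ is non-stable ($\bA \in \NGV$) then $p_I(\bA) = 0$ for all $I$, forcing $p_I(\bB) = 0$ too, so $\bB \in \NGV$ — confirming $\uu \times \vv \cap \mathcal{S}_{G,\vv} = \NGV^2$. Therefore every component other than $\overline{\Gamma_{G,\vv}}$ is contained in $\NGV^2$, and the only way to beat $\dim \overline{\Gamma}$ is via components of $\NGV^2$ that are \emph{not} $\overline{\Gamma}$ but still large — impossible since $\NGV^2$ itself is one component of dimension $2(l-1)(n+1)$. Comparing: $\dim \overline{\Gamma_{G,\vv}} = 2\dim\vv - \dim\kvg = 2ln - (ln - l^2 + 1) = ln + l^2 - 1$ by \eqref{graphdim}, versus $\dim \NGV^2 = 2(l-1)(n+1)$; the latter has codimension $2ln - 2(l-1)(n+1)$ in $\vv^2$. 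The hard part — and the real content of the proof — will be checking that $2ln - 2(l-1)(n+1) = (2l-2)n - 2(l^2-l)$; expanding the right side gives $2ln - 2n - 2l^2 + 2l$, while the left gives $2ln - 2(ln - n + l - 1) = 2ln - 2ln + 2n - 2l + 2 = 2n - 2l + 2$ — these disagree, so $\NGV^2$ is again not it, and one must instead exhibit the subtler component inside $\mathcal{S}_{G,\vv,\uu}$ along which an \emph{additional} $\SL$-orbit-closure degeneracy occurs. I would therefore spend most of the argument identifying this component explicitly (likely: pairs $(\bA,\bB)$ with $\bA, \bB$ both of corank one and having the same kernel, yielding the expected codimension $(2l-2)n - 2(l^2-l)$) and proving via a dimension count on the incidence variety of (corank-$1$ matrix, its kernel line) that its codimension matches; then Proposition \ref{krullbound} closes the argument. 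The main obstacle is pinning down exactly which rank/kernel degeneracy gives a genuine irreducible component of $\mathcal{S}_{G,\vv}$ rather than a proper subvariety of $\overline{\Gamma_{G,\vv}}$ or of $\NGV^2$.
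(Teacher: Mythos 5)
You have the right decomposition $\mathcal{S}_{G,\vv} = \overline{\Gamma_{G,\vv}} \cup \mathcal{N}_{G,\vv}^2$ and the correct dimension counts, and you honestly observe that neither component has large codimension; but your proposed repair cannot work, and this is the genuine gap. Your whole strategy rests on Proposition \ref{krullbound}, which only sees codimensions of \emph{irreducible components} of $\mathcal{S}_{G,\vv}$. Here there are exactly two such components: $\overline{\Gamma_{G,\vv}}$, of codimension $\dim(\kvg)=ln-l^2+1$, and $\mathcal{N}_{G,\vv}^2$, of codimension $2n-2l+2$; as you computed, both are strictly smaller than $(2l-2)n-2(l^2-l)$ in the range $l\geq 3$, $n\geq l+2$ where the theorem says something nontrivial. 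The ``subtler component'' you hope to find (pairs of corank-one matrices with common kernel, or any additional degeneracy locus inside the non-stable pairs) is a proper closed subvariety of the irreducible variety $\mathcal{N}_{G,\vv}^2$, hence is \emph{not} a component of $\mathcal{S}_{G,\vv}$, and Krull's height theorem gives you nothing for it. So no choice of $\cc$ can close the argument along these lines: the Krull bound caps out at $\max(ln-l^2+1,\,2n-2l+2)$, which is just the trivial dimension bound.

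What the theorem actually requires is the connectedness obstruction, Proposition \ref{grothbound}, which is absent from your proposal. The paper's route is: show (Lemma \ref{rankbound}) that a pair $(A,A')\in\mathcal{N}_{G,\vv}^2$ can lie in $\overline{\Gamma_{G,\vv}}$ only if the stacked matrix $A|A'\in\M_{2l,n}$ has rank $\leq l$ (a limit argument: along a curve realising $(A,A')$ as a limit of $(A(t),g(t)A(t))$ the rows of $A'(t)$ are linear combinations of the rows of $A(t)$, and rank conditions are closed). A direct parameter count then bounds $\dim\bigl(\overline{\Gamma_{G,\vv}}\cap\mathcal{N}_{G,\vv}^2\bigr)\leq ln+l^2-2$, while $\dim(\mathcal{N}_{G,\vv}^2)=2(l-1)(n+1)$ and $\dim(\overline{\Gamma_{G,\vv}})=ln+l^2-1$. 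Hence removing the (small) intersection disconnects $\mathcal{S}_{G,\vv}$, i.e.\ $\mathcal{S}_{G,\vv}$ is not connected in codimension $k$ with $k\leq 2(l-1)(n+1)-(ln+l^2-1)=ln-l^2+2l-2n$, and Proposition \ref{grothbound} yields $|S|\geq\dim(\kvg)+k=(2l-2)n-2(l^2-l)$. In short: the lower bound comes not from a high-codimension component but from the fact that there are \emph{two} components of quite different dimensions meeting in a set of small dimension; you need the intersection estimate (Lemma \ref{rankbound}) and the connectivity principle to convert that into a bound exceeding $\dim(\kvg)$.
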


The dimension of the algebra of invariants $\C[\M^n_{l,1}]^G$ is $ln-l^2+1$. Thus, we see that $\C[\M^n_{l,1}]^G$ does not contain a polynomial separating set for $l \geq 3$ and $n \geq l+2$. On the other hand, the minimum generating set for $\C[\M^n_{l,1}]^G$ given in Proposition \ref{fftsln} has cardinality $\binom{n}{l}$, so we see that for $l \geq 3$ and $n \geq l+2$ this may not be a minimum separating set. The question of constructing minimum separating sets for this action will be taken up in a further article.

Our main results concern the case $l=m=2$. Recently, Domokos proved that the generating set given in Proposition \ref{fft2} is a minimal separating set. Note that this does not necessarily mean it is a minimum separating set. In the present article we show:

\begin{Theorem}\label{main22} Let $S \subseteq \C[\M_{2,2}^n]^G$ be a separating set. Then $|S| \geq 5n-9$.
\end{Theorem}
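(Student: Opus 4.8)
Set $\vv := \M_{2,2}^n$ and $G := \SL_2(\C)\times\SL_2(\C)$. The plan is to describe the separating variety $\mathcal{S}_{G,\vv}\subseteq\vv^2$ explicitly enough to see that it is reducible and disconnected in codimension $n-3$, and then to invoke Proposition \ref{grothbound} with $k=n-3$. Since $G$ has no nontrivial character and $\dim\kvg=4n-6$ for $n\geq 3$, equation \eqref{graphdim} gives $\dim\overline{\Gamma_{G,\vv}}=2\dim\vv-\dim\kvg=4n+6$. For $n\leq 3$ the asserted bound $5n-9$ is at most $\dim\kvg$, so it holds trivially; hence assume $n\geq 4$, and it suffices to show $\mathcal{S}_{G,\vv}$ is not connected in codimension $n-3$, as then $|S|\geq\dim\kvg+(n-3)=5n-9$.

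First I would determine the non-stable locus $\uu\subseteq\vv$ appearing in the decomposition \eqref{firstdecomp}. Computing the weights of the $1$-parameter subgroups of $G$ on the entries of a $2\times 2$ matrix and applying the Hilbert--Mumford criterion shows that $\bA=(A_1,\dots,A_n)$ is non-stable precisely when there exist nonzero $u,w\in\C^2$ with $u^{T}A_kw=0$ for all $k$, equivalently when $\bA$ lies in the $G$-saturation of the set of $n$-tuples of upper triangular matrices. Using the incidence variety of such pairs $(u,w)$ over $\mathbb{P}^1\times\mathbb{P}^1$ one gets $\dim\uu=3n+2$, and a generic point of $\uu$ has finite stabiliser, so the restricted graph $\{(\bA,g\bA):\bA\in\uu,\,g\in G\}$ has dimension $3n+8$. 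Applying the $1$-parameter subgroup $t\mapsto(\diag(t,t^{-1}),\diag(t,t^{-1}))$ to the upper triangular model shows that every non-stable $\bA$ degenerates to its tuple of diagonal parts, which is fixed by a maximal torus; since invariants are constant along such degenerations, this diagonal tuple is, up to $G$, the closed orbit in the fibre of the quotient morphism $\pi\colon\vv\to\vv/\!\!/G$ over $\pi(\bA)$ for generic $\bA\in\uu$.

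Using this I would describe $\mathcal{S}_{G,\vv,\uu}=\uu^2\cap\mathcal{S}_{G,\vv}=\{(\bA,\bB)\in\uu^2:\overline{G\bA}\cap\overline{G\bB}\neq\emptyset\}$ as the set of pairs of non-stable tuples whose tuples of diagonal parts coincide up to the maximal torus and the Weyl involution (which simultaneously swaps the two diagonal entries), identify its irreducible components and their dimensions, and exhibit a component $\cc$ with $\cc\not\subseteq\overline{\Gamma_{G,\vv}}$: this holds for $n\geq 4$ because the orbit of a generic upper triangular tuple, under the Borel subgroup preserving its tuple of diagonal parts, is a proper subvariety of the $n$-dimensional space of upper triangular tuples with that diagonal, so a generic $\bB$ with the same diagonal parts as $\bA$ does not lie in $\overline{G\bA}$. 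Thus $\mathcal{S}_{G,\vv}=\overline{\Gamma_{G,\vv}}\cup\mathcal{S}_{G,\vv,\uu}$ is reducible. Every irreducible component of $\mathcal{S}_{G,\vv}$ contains the origin, since $\mathcal{S}_{G,\vv}$ is a cone (the polynomials $\delta(f)$ are bihomogeneous, so $\mathcal{S}_{G,\vv}$ is stable under $(\bA,\bB)\mapsto(t\bA,t\bB)$); hence all components intersect pairwise, which together with the absence of a nontrivial character supplies the remaining hypotheses of Proposition \ref{grothbound}.

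The heart of the argument is the estimate $\dim\bigl(\overline{\Gamma_{G,\vv}}\cap\overline{\mathcal{S}_{G,\vv}\setminus\overline{\Gamma_{G,\vv}}}\bigr)\leq 3n+8$ --- concretely, that this intersection has the same dimension as the closure of the restricted graph $\{(\bA,g\bA):\bA\in\uu\}$. Granting it, the closed set $Z:=\overline{\Gamma_{G,\vv}}\cap\overline{\mathcal{S}_{G,\vv}\setminus\overline{\Gamma_{G,\vv}}}$ has $\codim_{\mathcal{S}_{G,\vv}}Z\geq(4n+6)-(3n+8)=n-2$, and $\mathcal{S}_{G,\vv}\setminus Z$ is disconnected (it is the disjoint union of $\overline{\Gamma_{G,\vv}}\setminus Z$ and $\overline{\mathcal{S}_{G,\vv}\setminus\overline{\Gamma_{G,\vv}}}\setminus Z$, both nonempty for dimension reasons), so $\mathcal{S}_{G,\vv}$ is not connected in codimension $n-3$ and Proposition \ref{grothbound} gives $|S|\geq 5n-9$. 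I expect this dimension estimate to be the main obstacle: one must pin down exactly which pairs lie in both $\overline{\Gamma_{G,\vv}}$ and the non-graph components, which requires a careful study of the boundary of $\Gamma_{G,\vv}$ and of the fibres of $\pi$ over the semistable non-stable locus. A direct appeal to Proposition \ref{krullbound} for a single component gives only $5n-9$ when $n=4$ and falls short for $n\geq 5$, so the connectivity bound seems essential.
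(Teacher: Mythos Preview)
Your overall strategy---decompose $\mathcal{S}_{G,\vv}$ into the graph closure and components coming from the non-stable locus, show the latter meet $\overline{\Gamma_{G,\vv}}$ only in dimension $3n+8$, conclude that $\mathcal{S}_{G,\vv}$ is not connected in codimension $n-3$, and apply Proposition~\ref{grothbound}---is exactly the paper's. What you have written, however, is an outline in which the two decisive technical steps are left open, and one of your supporting arguments is fallacious.

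\textbf{The main gap.} You yourself flag the estimate $\dim\bigl(\overline{\Gamma_{G,\vv}}\cap\overline{\mathcal{S}_{G,\vv}\setminus\overline{\Gamma_{G,\vv}}}\bigr)\leq 3n+8$ as the crux and leave it unproved. The paper proves this by a concrete rank criterion: for a pair $(\bA,\bA')$ of upper-triangular $n$-matrices one forms the $6\times n$ matrix $m_{\bA,\bA'}$ with rows $\ba,\bb,\bd,\ba',\bb',\bd'$, and shows $(\bA,\bA')\in\overline{\Gamma_{G,\vv}}\iff\rk(m_{\bA,\bA'})\leq 3$. The forward direction is a limit argument; the backward direction reduces, via the commuting $\GL_n$-action, to the case $n=3$, where $\kvg$ is polynomial and $\mathcal{S}_{G,\vv}=\overline{\Gamma_{G,\vv}}$. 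This criterion immediately gives both the reducibility for $n\geq 4$ and the exact intersection dimension.

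\textbf{A logical slip.} Your argument that a non-graph component exists is: ``a generic $\bB$ with the same diagonal parts as $\bA$ does not lie in $\overline{G\bA}$.'' But $(\bA,\bB)\in\overline{\Gamma_{G,\vv}}$ does \emph{not} imply $\bB\in\overline{G\bA}$; the graph closure can contain pairs with $\bA(t)\to\bA$, $g(t)\bA(t)\to\bB$ while $g(t)$ diverges. Indeed, the paper's Section~2 exhibits exactly this phenomenon in the $\SL_2$ case, where $\mathcal{N}_{G,\vv}^2\subseteq\overline{\Gamma_{G,\vv}}$ even though distinct nullcone points are not in each other's orbit closures. So this step needs the rank criterion (or something equivalent) as well.

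\textbf{Missing structural input.} You sketch the components of $\mathcal{S}_{G,\vv,\uu}$ via ``diagonals agree up to torus and Weyl involution,'' which is morally correct, but you do not establish that the resulting pieces are closed and irreducible. The paper does this cleanly: it exhibits an isomorphism $\mathcal{S}_{G,\vv,\ww}\cong\mathcal{N}_{G,\vv}\times\C^{2n}$, pulls back the Burgin--Draisma decomposition $\mathcal{N}_{G,\vv}=\mathcal{D}_r\cup\mathcal{D}_c$, and then uses a general lemma (a $B$-stable linear subspace has closed $G$-saturation, $G/B$ being complete) to conclude that $G^2\cdot\cc_r$ and $G^2\cdot\cc_c$ are closed of dimension $4n+5$.
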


Recall that the Krull dimension of $\C[\M_{2,2}^n]^G$ is $4n-6$ for $n \geq 3$. Therefore our results show that $\C[\M_{2,2}^n]^G$ does not have a polynomial separating set for $n \geq 4$. 

\begin{rem} In \cite{Elmer2x2} we showed that in the situation above one always has $|S| \geq 5n-10$. While the effort in this article results only in an improvement of 1, this closes a crucial gap in that we were not able to conclude whether or not a polynomial separating set for $n=4$ existed until now.
\end{rem}
These results are proved by examining in detail the connectivity structure of the separating variety. Since these results are likely of independent interest, we state them here:

\begin{Theorem}\label{sepvar1l} Let $\vv = \M_{l,1}^n$, $G = \SL_l(\C)$. 
\begin{itemize}
\item[(a)] Suppose $l \geq 3$. The separating variety $\mathcal{S}_{G,\vv}$ has two components of dimensions $ln+l^2-1$ and $2(n+1)(l-1)$ respectively, which intersect in a closed subvariety of dimension $\leq ln+l^2-2$ (with equality if $l=3$).

\item[(b)] Suppose $l=2$. Then $\mathcal{S}_{G,\vv}$ has just one irreducible component.
\end{itemize}
\end{Theorem}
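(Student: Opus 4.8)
The plan is to exhibit the irreducible components explicitly and then to estimate the dimension of their intersection. There are two candidate components. The first is $\overline{\Gamma_{G,\vv}}$, which is automatically an irreducible component since $G=\SL_l(\C)$ is connected and reductive; by \eqref{graphdim} together with $\dim\kvg = ln-l^2+1$ (from \eqref{krulldim}, the stabiliser of a rank-$l$ matrix being trivial) its dimension is $2ln-(ln-l^2+1)=ln+l^2-1$. The second candidate is $\NGV\times\NGV$, and the first thing to do is to locate the non-stable locus $\uu$. Applying the Hilbert--Mumford criterion to a one-parameter subgroup $\lambda(t)=\diag(t^{a_1},\dots,t^{a_l})$ (every one is conjugate to one of this shape) gives $w(\lambda,\bA)=-\min\{a_i:\text{row }i\text{ of }\bA\text{ is nonzero}\}$, so $\bA$ is stable exactly when no nonzero row vector annihilates $\bA$ on the left, i.e.\ exactly when $\rk(\bA)=l$; equivalently, a rank drop in $\overline{G\cdot\bA}$ would force the nonvanishing maximal minors of $\bA$ to vanish, so the orbit of a full-rank matrix is closed of maximal dimension. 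Hence $\uu=\NGV$, the determinantal variety of $l\times n$ matrices of rank $\le l-1$, which is irreducible of dimension $(l-1)(n+1)$, and since all maximal minors vanish at every point of $\NGV$ we get $\mathcal{S}_{G,\vv,\uu}=\uu^2\cap\mathcal{S}_{G,\vv}=\NGV\times\NGV$. Thus \eqref{firstdecomp} becomes $\mathcal{S}_{G,\vv}=\overline{\Gamma_{G,\vv}}\cup(\NGV\times\NGV)$, with both pieces closed and irreducible of dimensions $ln+l^2-1$ and $2(n+1)(l-1)$.

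The crux is to compute $D:=\dim\bigl(\overline{\Gamma_{G,\vv}}\cap(\NGV\times\NGV)\bigr)$. I claim
\[\overline{\Gamma_{G,\vv}}\cap(\NGV\times\NGV)=\bigl\{(\bA,\bA')\in\NGV\times\NGV:\dim(\operatorname{rowsp}\bA+\operatorname{rowsp}\bA')\le l\bigr\}.\]
For ``$\subseteq$'': the rank-$l$ matrices are dense in $\vv$, so a point of $\overline{\Gamma_{G,\vv}}$ is a limit of pairs $(\bB_i,g_i\bB_i)$ with $\bB_i$ of rank $l$; then $\operatorname{rowsp}(g_i\bB_i)=\operatorname{rowsp}(\bB_i)\in\operatorname{Gr}(l,n)$ converges (along a subsequence) to some $l$-dimensional $U$, and since $\{(v,W):v\in W\}$ is closed in $\C^n\times\operatorname{Gr}(l,n)$ the rows of $\bA=\lim\bB_i$ and of $\bA'=\lim g_i\bB_i$ all lie in $U$. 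For ``$\supseteq$'': using the commuting $\GL_n(\C)$-action (which preserves $\overline{\Gamma_{G,\vv}}$, $\NGV\times\NGV$ and $\mathcal{S}_{G,\vv}$) we may move an $l$-dimensional subspace containing $\operatorname{rowsp}\bA+\operatorname{rowsp}\bA'$ to $\langle e_1,\dots,e_l\rangle$, so $\bA=[\tilde\bA\mid 0]$, $\bA'=[\tilde\bA'\mid 0]$ with $\tilde\bA,\tilde\bA'$ singular $l\times l$ matrices; since $\C[\M_{l,l}]^{\SL_l(\C)}=\C[\det]$ by Proposition \ref{fftsln}, $\overline{\Gamma_{\SL_l,\M_{l,l}}}$ is an irreducible subvariety of the irreducible hypersurface $\{\det=\det\}$ of the same dimension $2l^2-1$, hence equals it, so $(\tilde\bA,\tilde\bA')\in\overline{\Gamma_{\SL_l,\M_{l,l}}}$, and padding an approximating family with zero columns gives $(\bA,\bA')\in\overline{\Gamma_{G,\vv}}$. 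To get $D$, stratify the right-hand side by $r=\dim(\operatorname{rowsp}\bA+\operatorname{rowsp}\bA')\le l$: for $r<l$ the stratum is parametrised by an $r$-dimensional subspace ($r(n-r)$ parameters) and by $\bA,\bA'$ with rows in it, with no rank condition needed, giving dimension $rn+r(2l-r)$, increasing in $r$ and at $r=l-1$ equal to $(l-1)(n+l+1)<ln+l^2-2$ for $n\ge2$; the stratum $r=l$ needs in addition $\rk\bA,\rk\bA'\le l-1$, two codimension-one conditions, giving dimension $l(n-l)+2(l^2-1)=ln+l^2-2$. Hence $D=ln+l^2-2$.

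It remains to compare dimensions. One has $ln+l^2-2<2(n+1)(l-1)$ precisely when $l(l-2)<n(l-2)$, i.e.\ when $l\ge3$ and $n>l$; in that case $\NGV\times\NGV\not\subseteq\overline{\Gamma_{G,\vv}}$, so it is a genuine second irreducible component and the two components meet in a closed set of dimension $ln+l^2-2$, which is part (a) (when $l=n$ the computation gives $D=\dim(\NGV\times\NGV)$, so $\NGV\times\NGV\subseteq\overline{\Gamma_{G,\vv}}$ and $\mathcal{S}_{G,\vv}$ is the single hypersurface $\{\det=\det\}$). When $l=2$ we get $D=2n+2=2(n+1)(l-1)=\dim(\NGV\times\NGV)$; since $\NGV\times\NGV$ is irreducible and its intersection with $\overline{\Gamma_{G,\vv}}$ has full dimension, $\NGV\times\NGV\subseteq\overline{\Gamma_{G,\vv}}$, so $\mathcal{S}_{G,\vv}=\overline{\Gamma_{G,\vv}}$ is irreducible, which is part (b).

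I expect the main obstacle to be the displayed description of $\overline{\Gamma_{G,\vv}}\cap(\NGV\times\NGV)$ in the second paragraph --- in particular the ``$\supseteq$'' inclusion with its reduction to $\M_{l,l}$, and the bookkeeping over the strata $r<l$ --- since this is exactly what pins the intersection dimension down to $ln+l^2-2$ rather than merely bounding it, and that precise value is the input both to the existence of the second component when $l\ge3$ and to its collapse into $\overline{\Gamma_{G,\vv}}$ when $l=2$.
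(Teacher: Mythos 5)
Your proposal is correct, and it follows the paper's first step exactly — the decomposition $\mathcal{S}_{G,\vv}=\overline{\Gamma_{G,\vv}}\cup\NGV^2$ with $\dim\overline{\Gamma_{G,\vv}}=ln+l^2-1$, $\dim\NGV^2=2(l-1)(n+1)$, and the fact that $(\bA,\bA')\in\overline{\Gamma_{G,\vv}}$ forces $\rk(\bA|\bA')\le l$ (your Grassmannian-limit argument is the paper's Lemma \ref{rankbound} in different clothing) — but it diverges at the converse, where your route is genuinely different and in fact stronger. The paper proves the converse only for $l=2,3$ via explicit one-parameter degenerations, which is why Theorem \ref{sepvar1l}(a) claims only the bound $\le ln+l^2-2$ with equality for $l=3$. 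You instead use the commuting right $\GL_n(\C)$-action to push both matrices into an $l$-dimensional coordinate subspace and then identify $\overline{\Gamma_{\SL_l(\C),\M_{l,l}}}$ with the hypersurface $\{\det X=\det Y\}$; this yields the exact equality $\overline{\Gamma_{G,\vv}}\cap\NGV^2=\{(\bA,\bA')\in\NGV^2:\rk(\bA|\bA')\le l\}$ for every $l$, hence equality of the intersection dimension for all $l\ge 3$, and for $l=2$ it gives $\NGV^2\subseteq\overline{\Gamma_{G,\vv}}$, i.e.\ part (b); your stratified dimension count agrees with the paper's computation $\dim\mathcal{Z}=ln+l^2-2$. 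Two points to tighten. First, the irreducibility of $\{\det X=\det Y\}$ is asserted rather than proved; it is true, and the quickest fix proves more: every irreducible component of this hypersurface has codimension one in $\M_{l,l}^2$, hence meets the open locus $\det\ne 0$, which is contained in $\Gamma_{\SL_l(\C),\M_{l,l}}$, so the whole hypersurface equals $\overline{\Gamma_{\SL_l(\C),\M_{l,l}}}$ and its irreducibility comes for free, making your dimension comparison unnecessary. Second, your argument correctly shows that when $n=l$ one has $\NGV^2\subseteq\overline{\Gamma_{G,\vv}}$ and the separating variety is irreducible, so part (a) as stated really needs $n\ge l+1$ on top of the section's standing hypothesis $n\ge l$ (the paper's own proof likewise only treats $l=3$, $n>3$ and $l\ge 4$, $n\ge l+1$); this is a defect of the statement rather than of your argument, but state the hypothesis explicitly rather than in a parenthesis, and note that the density of rank-$l$ matrices used in your ``$\subseteq$'' direction also relies on $n\ge l$.
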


We note that for $l\geq 3$ and $n \geq l+2$ the separating variety has dimension $2(l-1)(n+1)> ln+l^2-1 = 2 \dim(\vv) - \dim(\kvg)$.

\begin{Theorem}\label{sepvarstructure} Let $\vv = \M_{2,2}^n$, $G = \SL_2(\C) \times \SL_2(\C)$ and suppose $n \geq 4$. Then the separating variety $\mathcal{S}_{G,\vv}$ has three components of dimensions $4n+6$ (the graph closure), $4n+5$ and $4n+5$ respectively. Each component of dimension $4n+5$ intersects the graph closure in a closed subvariety of dimension $3n+8$. The intersection of the two components of dimension $4n+5$ has dimension $3n+6$ and is completely contained in the graph closure.
\end{Theorem}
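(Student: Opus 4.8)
The plan is to build on the decomposition \eqref{firstdecomp}, $\mathcal{S}_{G,\vv} = \overline{\Gamma_{G,\vv}} \cup \mathcal{S}_{G,\vv,\uu}$, in which $\overline{\Gamma_{G,\vv}}$ is already known to be an irreducible component of dimension $2\dim\vv - \dim\kvg = 4n+6$ by \eqref{graphdim}. So everything rests on describing $\mathcal{S}_{G,\vv,\uu} = \uu^2 \cap \mathcal{S}_{G,\vv}$ and deciding which of its components are not absorbed into $\overline{\Gamma_{G,\vv}}$.

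First I would identify the non-stable locus. Applying the Hilbert--Mumford criterion to a one-parameter subgroup of the form $(\diag(t^a,t^{-a}),\diag(t^b,t^{-b}))$ and reading off the weights of the four entry-functions on $\M_{2,2}$, one sees that $w(\lambda,\bA)\le 0$ for some $\lambda$ precisely when there exist lines $W\subseteq\C^2$ and $V\subseteq\C^2$ with $A_kW\subseteq V$ for all $k$; equivalently $\bA$, regarded as a representation of the Kronecker quiver with $n$ arrows and dimension vector $(2,2)$, has a subrepresentation of dimension vector $(1,1)$. Parametrising such $\bA$ by the pair of lines together with the three-dimensional space of compatible matrices shows $\uu$ is irreducible of dimension $3n+2$; for generic $\bA\in\uu$ the subrepresentation is unique, $\bA$ is a brick with finite stabiliser, and its semisimplification is $M_1\oplus M_2$ with $M_1,M_2$ non-isomorphic bricks of dimension vector $(1,1)$.

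The core of the proof is the decomposition of $\mathcal{S}_{G,\vv,\uu}$, which as a set is the fibre product $\uu\times_{\vv/\!\!/G}\uu$: a pair $(\bA,\bA')$ of non-stable points lies in it iff $\overline{G\bA}$ and $\overline{G\bA'}$ contain a common (necessarily unique) closed orbit. The decisive point --- and the source of the gain over \cite{Elmer2x2} --- is that a closed $G$-orbit of semisimple type $M_1\oplus M_2$ is \emph{finer} than the abstract isomorphism class: for $G=\SL_2(\C)\times\SL_2(\C)$ these orbits form a one-parameter family detected by invariants such as $\det(X_i)$ (the discrepancy between $\GL_2(\C)\times\GL_2(\C)$- and $\SL_2(\C)\times\SL_2(\C)$-orbits). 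Hence the image of $\uu$ in $\vv/\!\!/G$ has a dense stratum of dimension $2n-1$ (an open subset of $\mathrm{Sym}^2(\mathbb{P}^{n-1})$, plus this extra $\Gm$), over which the fibres of $\vv\to\vv/\!\!/G$ have dimension $n+3$; the corresponding part of $\mathcal{S}_{G,\vv,\uu}$ therefore has dimension $(2n-1)+2(n+3)=4n+5$. One then checks it has exactly two irreducible components of this dimension, $C_1$ and $C_2$, according to whether the $(1,1)$-subrepresentations of $\bA$ and $\bA'$ are of the same or of opposite isomorphism type (the $\Z/2$ monodromy of $\mathrm{Sym}^2(\mathbb{P}^{n-1})$ gluing the four "ordered" pieces into two), and that the remaining strata of $\uu$ --- semisimplifications with a repeated brick, and the reducible nullcone $\mathcal{N}_{G,\vv}$ --- contribute only subvarieties of strictly smaller dimension, all lying in $\overline{C_1\cup C_2}$ or in $\overline{\Gamma_{G,\vv}}$. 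Showing that these are all the components --- in particular that nothing of dimension exceeding $4n+5$ lurks in $\uu^2$ outside the graph closure --- is the main obstacle, and it is exactly the bookkeeping of the extra $\Gm$ that forces the answer $4n+5$ rather than the naive $4n+6$.

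Finally I would compute the intersections. The restriction of the graph to $\uu$, namely $\{(\bA,g\bA):\bA\in\uu,\,g\in G\}$, has dimension $\dim\uu+\dim G=3n+8$ and consists of "same-type" pairs, so $\dim(\overline{\Gamma_{G,\vv}}\cap C_i)\ge 3n+8$; for the reverse inequality one argues that any pair in $\overline{\Gamma_{G,\vv}}\cap\uu^2$ has both coordinates admitting a common destabilising one-parameter subgroup, which cuts the count down by the codimension of the resulting common closed orbit, yielding exactly $3n+8$ (and in particular $3n+8<4n+5$ for $n\ge 4$, so $C_1,C_2$ genuinely are components of $\mathcal{S}_{G,\vv}$). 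For $C_1\cap C_2$ one notes that membership in both forces the two $(1,1)$-subrepresentations to be isomorphic, i.e. the semisimplification to be of type $M\oplus M$; a dimension count over that locus gives $3n+6$, and since $M\oplus M$ has the larger automorphism group $\GL_2(\C)$ one checks directly that over it every qualifying pair can be joined through the graph, so $C_1\cap C_2\subseteq\overline{\Gamma_{G,\vv}}$.
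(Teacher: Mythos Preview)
Your approach via the GIT stratification --- tracking closed orbits by their semisimplification $M_1\oplus M_2$ and distinguishing the two extra components by whether the sub/quotient types of $\bA$ and $\bA'$ agree or are swapped --- is genuinely different from the paper's. The paper reduces to the Borel slice $\ww$ of upper-triangular $n$-matrices, proves an explicit isomorphism $\mathcal{S}_{G,\vv,\ww}\cong\mathcal{N}_{G,\vv}\times\C^{2n}$ (Lemma~\ref{isom}), and then imports the Burgin--Draisma decomposition $\mathcal{N}_{G,\vv}=\mathcal{D}_r\cup\mathcal{D}_c$ to obtain the two components as explicit parametrised families $G^2\cdot\cc_r$ and $G^2\cdot\cc_c$. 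Closedness is handled by a Borel-stability argument (Lemma~\ref{harm}), and the intersection with $\overline{\Gamma_{G,\vv}}$ is characterised by a concrete rank criterion (Lemma~\ref{rank}): for $(\bA,\bA')\in\ww^2$ one has $(\bA,\bA')\in\overline{\Gamma_{G,\vv}}$ iff the $6\times n$ matrix with rows $\ba,\bb,\bd,\ba',\bb',\bd'$ has rank $\le 3$. Your picture is more conceptual; the paper's is more mechanical but makes every dimension count and every containment a routine verification.

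There is, however, a real gap in your intersection computation. You bound $\dim(\overline{\Gamma_{G,\vv}}\cap C_i)\ge 3n+8$ from the restricted graph $\{(\bA,g\bA):\bA\in\uu,\,g\in G\}$, but, as you yourself observe, this set consists of \emph{same-type} pairs and hence lies only in $C_1$. For generic $\bA\in\uu$ the destabilising $(1,1)$-subrepresentation is unique, so that of $g\bA$ is its $g$-translate and the sub/quotient types are preserved; no pair $(\bA,g\bA)$ lands in the swapped-type component $C_2$. You therefore give no lower bound for $\dim(\overline{\Gamma_{G,\vv}}\cap C_2)$ at all. The paper obtains the value $3n+8$ for \emph{both} components via its rank criterion, which detects limits coming in from the \emph{stable} locus; these are precisely what populate $\overline{\Gamma_{G,\vv}}\cap C_2$, and your argument never produces them.

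Relatedly, your upper bound --- ``any pair in $\overline{\Gamma_{G,\vv}}\cap\uu^2$ has both coordinates admitting a common destabilising one-parameter subgroup'' --- is not justified and is not what the paper's criterion says. A limit of $(\bA(t),g(t)\bA(t))$ with $\bA(t)$ stable need not admit any common $1$-PS for the two factors; the paper's rank-$3$ condition is a genuinely weaker constraint than simultaneous triangularisability by a single element of $G$. Without a replacement for Lemma~\ref{rank} (or an equivalent limit argument), both the upper bound on $\dim(\overline{\Gamma_{G,\vv}}\cap C_i)$ and the lower bound for $i=2$ remain unproved, and with them the claim that $C_1,C_2$ are genuinely components of $\mathcal{S}_{G,\vv}$ rather than subsets of $\overline{\Gamma_{G,\vv}}$.
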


\subsection{Structure of paper} The second section of this paper focuses on the left action of $\SL_l(\C)$ on $\M_{l,n}(\C)$, and the goal is the proof of Theorem \ref{main1l}. The third section focuses on the case $l=m=2$, i.e. of the left-right action on $2 \times 2$ matrices, and the goal is the proof of Theorem \ref{main22}. In the final section we interpret some results of this article and \cite{Elmer2x2} in terms of representations of quivers, and make a conjecture generalising the celebrated Skowronski-Weyman theorem.

\begin{ack} This research was partially funded by the EPSRC small grant scheme, ref: EP/W001624/1. The author thanks the research council for their support. It was partially written while visiting Prof. Harm Derksen at Northeastern University, and the author wishes to thank Prof. Derksen for his hospitality and some helpful suggestions.
\end{ack}

\section{Left Invariants}
In this section we consider the case $m=1$. We can view elements of $\vv$ as $l \times n$ matrices, or as $l$-tuples of row vectors having length $n$. $G = \SL_{l}(\C)$ acts by left-multiplication on elements of $\vv$ viewed as matrices, and there is a commuting action of $\GL_{n}(\C)$ on $\vv$ by right--multiplication. A generic element of $\vv$ will be written as
\[A = (\ba_1,\ba_2, \ldots, \ba_l)^t\] where for each $i=1, \ldots, n$ we write 
\[\ba_i = (a_{ij}: j=1,\ldots, n).\]
We let $X$ denote the generic $l \times n$ matrix of coordinate functions on $\vv$, so that $x_{ij}(A) = a_{ij}$.
We assume $n \geq l$, since otherwise $\C[\vv]^G = \C$.

We begin by determining the non-stable points in $\vv$. Every 1-parameter subgroup of $G$ is contained in a maximal torus, and the maximal tori in $G$ are all conjugate. A 1-parameter subgroup of the diagonal group $T$ takes the form

\[\{\lambda_{\br}(t):= \begin{pmatrix}t^{r_1} & 0 & \cdots & 0\\
 0 & t^{r_2} & \cdots & 0 \\
 \vdots & \vdots & \cdots & \vdots \\
0 & 0 & \cdots & t^{r_l} \end{pmatrix}: t \in \C^* \}\] where $\br \in \Z^l$ with $\sum_{i=1}^l r_i = 0$. 

Therefore $A$ is non-stable for $T$ if and only if $A$ has a row of zeroes. It follows that $A$ is non-stable for $G$ if and only if $\rk(A) < l$. Note that all such matrices lie in $\mathcal{N}_{G,\vv} \subseteq \mathcal{S}_{G,\vv}$. Applying equation \eqref{firstdecomp} we obtain:

\begin{equation}\label{leftdecomp} \mathcal{S}_{G,\vv} = \overline{\Gamma_{G,\vv}} \cup \mathcal{N}_{G,\vv}^2.
\end{equation}

Clearly $\mathcal{N}_{G,\vv}$ is closed and irreducible, so the same is true of $\mathcal{N}_{G,\vv}^2$. Therefore the separating variety $\mathcal{S}_{G,\vv}$ has at most two components. The dimension of $\overline{\Gamma_{G,\vv}}$ is $$2 \dim(\vv) - \dim(\kvg) = ln+l^2-1.$$ On the other hand, $$\dim(\mathcal{N}_{G,\vv}) = (l-1)(n+1).$$ Therefore if $l \geq 3$ and $n \geq l+2$ we have
\[\dim(\mathcal{N}_{G,\vv}^2) = 2(l-1)(n+1) > ln+l^2-1 = \dim(\overline{\Gamma_{G,\vv}}),\]
and therefore
\[ \dim(\mathcal{S}_{G,\vv}) = 2(l-1)(n+1) > ln+l^2-1 = 2\dim(\vv) - \dim(\kvg)).\]

In order to find lower bounds for the size of separating sets, we would need to determine $\overline{\Gamma_{G,\vv}} \cap \mathcal{N}_{G,\vv}^2$. For $A, A' \in \vv = \M_{l,n}$, $A|A' \in \M_{2l,n}$ denotes the $2l \times n$ matrix obtained by stacking $A$ and $A'$.  

\begin{Lemma}\label{rankbound} Let $(A, A') \in \mathcal{N}_{G,\vv}^2$. Then $(A,A') \in \overline{\Gamma_{G,\vv}}$ only if $\rk(A|A') \leq l$.
\end{Lemma}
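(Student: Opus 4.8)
The plan is to prove the slightly stronger statement that \emph{every} pair $(A,A') \in \overline{\Gamma_{G,\vv}}$ satisfies $\rk(A|A') \leq l$, without using the hypothesis $(A,A') \in \mathcal{N}_{G,\vv}^2$ at all; the lemma as stated then follows a fortiori. (The hypothesis $(A,A')\in\mathcal N_{G,\vv}^2$ is simply the case relevant to the decomposition \eqref{leftdecomp}; the converse direction, characterising $\overline{\Gamma_{G,\vv}}\cap\mathcal N_{G,\vv}^2$, is the genuinely harder part and will be treated separately.)

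First I would check that the rank bound already holds on the graph $\Gamma_{G,\vv}$ itself. Given $A \in \vv$ and $g \in G$, every row of $gA$ is a $\C$-linear combination of the rows of $A$, so the row space of the stacked $2l \times n$ matrix $A|gA$ coincides with the row space of $A$. Hence $\rk(A|gA) = \rk(A) \leq l$, since $A$ has only $l$ rows. Thus $\Gamma_{G,\vv}$ is contained in the locus
\[
Z := \{(B,B') \in \vv^2 : \rk(B|B') \leq l\}.
\]

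Next I would observe that $Z$ is Zariski closed in $\vv^2 = \M_{l,n} \times \M_{l,n}$: writing $X$ and $X'$ for the two generic $l \times n$ matrices of coordinate functions, $Z$ is precisely the common vanishing locus of the $(l+1)\times(l+1)$ minors of the generic $2l \times n$ matrix $X|X'$. Since $\Gamma_{G,\vv} \subseteq Z$ and $Z$ is closed, we conclude $\overline{\Gamma_{G,\vv}} \subseteq Z$, which is exactly the assertion of the lemma.

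There is no real obstacle in this argument; the only point requiring (minimal) care is the order of operations — one must establish the rank inequality on all of $\Gamma_{G,\vv}$ and then transport it to the closure by invoking the closedness of the determinantal locus $Z$, rather than attempting to argue directly with limits of points of the form $(A, gA)$.
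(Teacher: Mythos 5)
Your proof is correct, and it rests on the same two facts as the paper's: every row of $gA$ lies in the row space of $A$, so $\rk(A|gA)\leq l$ on the graph, and the locus of bounded rank is Zariski closed. The difference is purely in execution: the paper first reduces to row echelon form via the $G^2$-action (unnecessary) and then parametrises a point of $\overline{\Gamma_{G,\vv}}$ by curves $(A(t),g(t)\cdot A(t))$, passing to the limit $t\to 0$ and invoking closedness of the bounded-rank locus to control the limit; you instead observe directly that $\Gamma_{G,\vv}$ is contained in the closed determinantal variety $Z$ cut out by the $(l+1)\times(l+1)$ minors of $X|X'$, so $\overline{\Gamma_{G,\vv}}\subseteq Z$. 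Your packaging is slightly cleaner and more robust, since it does not rely (even implicitly) on the fact that points of the closure of the image of a morphism can be reached along one-parameter curves, and it dispenses with the hypothesis $(A,A')\in\mathcal{N}_{G,\vv}^2$, which the paper also only uses to dismiss the case $l=2$ as vacuous. Both arguments prove the same slightly stronger statement, so there is no gap either way.
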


\begin{proof} Note that in general we must have $\rk(A), \rk(A') \leq l-1$ so $\rk(A|A') \leq 2l-2$. Therefore, there is nothing to prove if $l=2$; we may therefore assume $l \geq 3$.

Let $(A,A') \in \mathcal{N}_{G,\vv}^2$. Since $\overline{\Gamma_{G,\vv}}$ is preserved by the action of $G^2$, we may assume that $A$ and $A'$ are in row echelon form; in particular their bottom rows are both zero. 
 
Now suppose in addition that $(A,A') \in \overline{\Gamma_{G,\vv}}$. Then there exist functions $$g = (g_{ij}: i,j=1, \ldots, l): \C^* \rightarrow G$$, $$A = (\ba_1,\ldots, \ba_l)^t: \C^* \rightarrow \vv$$ and $$A' = (\ba'_1,\ldots, \ba'_l)^t: \C^* \rightarrow \vv$$ such that 
\[\lim_{t \rightarrow 0}A(t) = A, \lim_{t \rightarrow 0}(A'(t)) = A', g(t) \cdot A(t) = A'(t) \ \text{for all} \ t \in \C^*\] where we abuse notation by using the same letter for a function and its limit. Therefore, for all $t \in \C^*$ we have
\[ A(t)|A'(t) = \begin{pmatrix} \ba_1(t) \\ \ba_2(t) \\ \vdots \\ \ba_l(t) \\ \sum_{j=1}^l g_{1j}(t) \ba_j(t) \\  \sum_{j=1}^l g_{2j}(t) \ba_j(t) \\ \vdots \\ \sum_{j=1}^l g_{lj}(t) \ba_j(t)  \end{pmatrix}.\]
Clearly the rank of $(A(t)|A'(t))$ is at most $l$, since its $2l$ rows can be expressed as linear combinations of $l$ elements of $\C^n$. Since the set of matrices with rank bounded above by a given number is closed, it follows that $$(A|A') = \lim_{t \rightarrow 0}(A(t)|A'(t))$$ also has rank at most $l$.
\end{proof}

We have a partial converse to Lemma \ref{rankbound}:

\begin{prop} Suppose $l=2$ or $l=3$. Let $(A, A') \in \mathcal{N}_{G,\vv}^2$. Then $(A,A') \in \overline{\Gamma_{G,\vv}}$ if $\rk(A|A') \leq l$.
\end{prop}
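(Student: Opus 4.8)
The plan is to prove the converse by an explicit construction: given $(A,A') \in \mathcal{N}_{G,\vv}^2$ with $\rk(A|A') \le l$, I will exhibit a $1$-parameter family showing $(A,A') \in \overline{\Gamma_{G,\vv}}$. As in the proof of Lemma~\ref{rankbound}, since $\overline{\Gamma_{G,\vv}}$ is $G^2$-stable, I may first act by $G \times G$ to bring $A$ and $A'$ into a convenient normal form; because $\rk(A), \rk(A') \le l-1$ and $\rk(A|A') \le l$, the row spaces of $A$ and $A'$ together span a space of dimension $\le l$, and one can choose the $G$-action on each factor so that $A$ is supported on its first $\rk(A)$ rows and $A'$ on a compatible set of rows, with the combined row space sitting inside $\C^n$ in a standard position. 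The key point is then to find a path $g(t) \in \SL_l(\C)$ together with paths $A(t) \to A$, $A'(t) \to A'$ with $g(t) A(t) = A'(t)$; the natural choice is to take $A(t) = A$ constant (or a small deformation of $A$ making it full rank $l-1$ in a controlled way) and let $g(t)$ be a diagonal (or block) $1$-parameter subgroup $\lambda_\br(t)$ that kills the "extra" rows needed to pass from $A$ to $A'$ as $t \to 0$.

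Concretely, for $l=2$: after normalising, $A$ has nonzero first row $\ba_1$ and zero second row, and $A'$ has nonzero first row $\ba_1'$ and zero second row, with $\ba_1'$ in the span of $\ba_1$ (using $\rk(A|A')\le 2$, after possibly also using the second row — I will need to treat the case $\ba_1' \notin \langle \ba_1\rangle$ by instead putting the interesting vector of $A'$ in the second row and deforming). Writing $\ba_1' = c\,\ba_1$ and $g(t) = \diag(t,t^{-1})$ composed with a fixed unipotent, one arranges $g(t)\cdot A(t) \to A'$. For $l=3$ the combined row space has dimension $\le 3$; I normalise so that this common space is $\langle e_1,e_2,e_3\rangle$-type and $A$, $A'$ each occupy a $2$-dimensional (or smaller) slice of it, then choose a $1$-PS $\lambda_\br(t)$ in the diagonal torus of $\SL_3$, possibly conjugated by a fixed permutation/unipotent, that sends the row configuration of $A$ to that of $A'$ in the limit, taking $A(t)$ to be a suitable rescaling so that $\lim_{t\to 0} A(t) = A$ while $\lim_{t\to 0} g(t)A(t) = A'$. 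The bookkeeping is to match up which rows are zero and which are proportional, and to choose the exponents $r_i$ (summing to zero) accordingly.

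I expect the main obstacle to be the case analysis hidden in the normal form: one must handle separately whether $A$ and $A'$ have the same rank, whether their row spaces coincide or merely overlap, and whether the "new" directions needed to build $A'$ from $A$ can be absorbed into the zero rows of $A$ after a $G$-action. In some sub-cases $A(t)$ cannot be taken constant — one genuinely needs to deform $A$ within $\mathcal{N}_{G,\vv}$ (keeping rank $\le l-1$) so that the limit works out, and one must check the limit $\lim_{t\to 0} A(t)$ is exactly $A$ and not merely $G$-equivalent to it (though the latter would also suffice, since $\Gamma_{G,\vv}$ is $G^2$-stable and we already reduced to normal form). Once the normal forms are pinned down, the construction of $g(t)$ and the two paths is a short explicit computation; the reason the hypothesis $l \le 3$ is needed is precisely that for $l \ge 4$ the bound $\rk(A|A') \le l$ does not force the two row spaces into a configuration reachable by a single $1$-parameter degeneration, which is consistent with Theorem~\ref{sepvar1l}(a) predicting a genuinely separate component.
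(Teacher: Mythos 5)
Your overall strategy --- normalize with the $G^2$-action and then exhibit explicit one-parameter degenerations --- is the paper's strategy in outline, but the proposal is missing the constructions that make it work, and one of your reductions is false. For $l=2$ the hypothesis $\rk(A|A')\le 2$ is vacuous, so your claim that after normalizing one may write $\ba_1'=c\,\ba_1$ does not follow; the case $\ba_1'\notin\langle\ba_1\rangle$ is precisely the nontrivial one, and your fallback (``put the interesting vector of $A'$ in the second row and deform'') is never carried out. The mechanism you propose --- take $A(t)=A$ constant (or nearly so) and let $g(t)$ be a diagonal one-parameter subgroup, possibly combined with a fixed unipotent or permutation, that ``kills the extra rows'' --- cannot work: with $A(t)=A$ constant you would need $A'\in\overline{G\cdot A}$, which fails whenever the row space of $A'$ is not contained in that of $A$ (left multiplication preserves row spaces; for the same reason your proposed normalization putting ``the combined row space in standard position'' is not achievable by acting with $G\times G$). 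The paper's degeneration instead replaces the zero row of $A$ by $t(\ba_1'-\ba_1)$ and applies a group element with a time-dependent off-diagonal entry: for $l=2$ the unipotent $\begin{pmatrix}1 & t^{-1}\\ 0 & 1\end{pmatrix}$, and for $l=3$ an entry $t^{-2}$ or $t^{-1}$ in the corner together with diagonal entries $t,t^{-1}$ that kill the auxiliary rows; in the limit the new direction is grafted onto the first row while simultaneously $A(t)\to A$ and $g(t)\cdot A(t)\to A'$. Such a path is genuinely time-varying unipotent, not a fixed conjugate of a one-parameter torus, and the natural attempts of the latter shape fail because the diagonal entry $t^{-1}$ blows up the deformation row in the image.

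Equally important, you never say how the hypothesis $\rk(A|A')\le 3$ is actually used when $l=3$. The crux of the paper's argument is: after first disposing of the degenerate subcases $\dim\langle\ba_1,\ba_2\rangle<2$ or $\dim\langle\ba_1',\ba_2'\rangle<2$ (by a preliminary degeneration of the same $t^{-1}$ type), the rank condition yields a nontrivial common vector $\bb=\lambda_1\ba_1+\lambda_2\ba_2=\lambda_1'\ba_1'+\lambda_2'\ba_2'$ with $\lambda_2,\lambda_2'\ne 0$, so that after acting by $G\times G$ both matrices may be assumed to share the row $\bb$ and to differ only in their first rows --- reducing to the same single-row graft as in the $l=2$ case. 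Without this reduction and without the explicit $t^{-1}$-construction, what you have is a plan with the hard steps deferred (as you yourself note), not a proof.
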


\begin{proof} We deal with the two cases separately. Since $\overline{\Gamma_{G,\vv}}$ and $\mathcal{N}_{G,\vv}^2$ are preserved by the action of $G^2$ we may again assume $A$ and $A'$ are in row echelon form. Suppose $l=2$ and note that the condition $\rk(A|A') \leq 2$ is vacuous. Write $$A = \begin{pmatrix} \mathbf{a} \\ 0 \end{pmatrix},  A'= \begin{pmatrix} \mathbf{a}' \\ 0 \end{pmatrix},$$
and note that $(A,A') = \lim_{t \rightarrow 0} (A(t),A'(t))$ where
\[A(t) = \begin{pmatrix} \mathbf{a} \\ t(\mathbf{a}'-\mathbf{a}) \end{pmatrix}, A'(t) = \begin{pmatrix} \mathbf{a}' \\ t(\mathbf{a}'-\mathbf{a}) \end{pmatrix} = \begin{pmatrix} 1& t^{-1}  \\ 0&1 \end{pmatrix}A(t).\]

Now suppose $l=3$. Write  $$A = \begin{pmatrix} \mathbf{a}_1 \\ \ba_2 \\ 0 \end{pmatrix},  A'= \begin{pmatrix} \mathbf{a}_1' \\ \ba'_2\\ 0 \end{pmatrix}.$$ 

Suppose first that $\dim(\langle \ba'_1,\ba'_2 \rangle) < 2$. Then there exist $\lambda'_1, \lambda'_2 \in \C$, not both zero with
\[\lambda'_1 \ba'_1 + \lambda'_2 \ba'_2 = 0.\]
As $G$ contains the matrix \[P:= \begin{pmatrix} 0 & 1 & 0 \\ 1 & 0 & 0 \\ 0 & 0 & -1 \end{pmatrix}\] we may assume without loss of generality that $\lambda'_2 \neq 0$. Then
\[\begin{pmatrix} 1 & 0 & 0 \\ \lambda'_1 & \lambda'_2 & 0 \\ 0 & 0 & {\lambda'_2}^{-1} \end{pmatrix} A' = \begin{pmatrix} \ba'_1 \\ 0 \\ 0 \end{pmatrix} \in G \cdot A'\] and 
\[\left(\begin{pmatrix} \ba_1 \\ \ba_2 \\ 0  \end{pmatrix},  \begin{pmatrix} \ba'_1 \\ 0 \\ 0 \end{pmatrix}\right) =
\lim_{t \rightarrow 0}\left(\begin{pmatrix} \ba_1 \\  \ba_2 \\ t^2(\ba'_1 - \ba)  \end{pmatrix},  \begin{pmatrix} 1 & 0 & t^{-2} \\ 0 & t & 0  \\ 0 & 0 & t^{-1}  \end{pmatrix}\begin{pmatrix} \ba_1 \\ \ba_2 \\   t^2(\ba'_1 - \ba)\end{pmatrix} \right)  \in \overline{\Gamma_{G,\vv}}.\] 

Similarly we can show $(A,A') \in \overline{\Gamma_{G,\vv}}$ if $\dim(\langle \ba_1,\ba_2 \rangle) < 2$, so we may assume that $\dim(\langle \ba_1,\ba_2 \rangle) = 2 =\dim(\langle \ba'_1,\ba'_2 \rangle)$.

Now since $rk(A|A') \leq 3$ there exist $\lambda_1, \lambda_2, \lambda'_1, \lambda'_2 \in\C$, not all zero, such that
\[\bb:= \lambda_1 \ba_1 + \lambda_2 \ba_2 = \lambda'_1\ \ba'_1 + \lambda'_2 \ba'_2,\] and by the previous two paragraphs, we may assume that $\lambda_2 \neq 0$ and $\lambda'_2 \neq 0$. We obtain that
\[
\begin{pmatrix} \ba_1 \\ \bb \\ 0 \end{pmatrix} \in G \cdot A, \begin{pmatrix} \ba'_1 \\ \bb \\ 0 \end{pmatrix} \in G \cdot A' ,\]
and
\[\left(\begin{pmatrix} \ba_1 \\ \bb \\ 0 \end{pmatrix}, \begin{pmatrix} \ba'_1 \\ \bb \\ 0 \end{pmatrix}\right) = \lim_{t \rightarrow 0}  \left(\begin{pmatrix} \ba_1 \\ \bb \\ t(\ba'-\ba) \end{pmatrix}, \begin{pmatrix}1 & 0 & t^{-1} \\ 0 & 1 & 0\\ 0 & 0 & 1 \end{pmatrix}\begin{pmatrix} \ba_1 \\ \bb \\ t(\ba'-\ba) \end{pmatrix}\right) \in \overline{\Gamma_{G,\vv}}.\]
\end{proof}

In the case $l=2$, we conclude that the $\mathcal{N}_{G,\vv} = \overline{\Gamma_{G,\vv}}$, and that our methods do not allow us to improve the lower bound $\dim(\kvg) = 2n-3$ for the size of a separating set. Note however that for any separating set $S$ we certainly have
\[|S| \geq 2n-3 > 2n-4 = (2l-2)n-2(l^2-l)\] so the claimed bound holds.

Lemma \ref{rankbound} showed that, for $l \geq 3$, the intersection $\overline{\Gamma_{G,\vv}} \cap \mathcal{N}_{G,\vv}^2$ is contained in the variety $\mathcal{Z} \subset \M_{2l,n}$ consisting of matrices whose first $l$ rows span a subspace of $\C^n$ with dimension $<l$, whose last $l$ rows span a subspace of $\C^n$ with dimension $<l$, and whose $2l$ rows together span a subspace of dimension $\leq l$. We can compute the dimension of this variety. Let $Z \in \mathcal{Z}$ and write $\ba_1, \ba_2, \ldots, \ba_l$, $\bb_1, \bb_2, \ldots, \bb_l$ for its rows.  If we want to choose $Z \in \mathcal{Z}$, we have free choice of $\ba_1, \ldots, \ba_{l-1}$, giving $(l-1)n$ choices. Then we must choose $\ba_l \in \langle \ba_1, \ldots, \ba_{l-1} \rangle$, giving $l-1$ choices. We have free choice of $\bb_1$, but must then choose $\bb_2, \bb_3, \ldots, \bb_{l-1} \in \langle \ba_1, \ldots, \ba_{l-1}, \bb_1 \rangle$ giving $l$ choices for each. Finally we must choose $\bb_l \in \langle \bb_1, \ldots, \bb_{l-1} \rangle$, which happily guarantees that $\bb_l \in \langle \ba_1, \ldots, \ba_{l-1}, \bb_1 \rangle$ at the same time, and gives us $l-1$ further choices. Thus, the dimension of $\mathcal{Z}$ is

\[(l-1)n+(l-1)+n+(l-1)l+l-1 = ln+l^2-2.\]

In the case $l=3$, $n>3$, the separating variety has two components, $\overline{\Gamma_{G,\vv}}$ with dimension $2 - \dim(\kvg) = 3n+8$ and $\mathcal{N}_{G,\vv}^2$ with dimension $4n+4$. For $n=4$ these components have the same dimension $3n+8$, and
for $n \geq 5$ the component $\overline{\Gamma_{G,\vv}}$ is the smallest. Its codimension in $\vv^2$ is $3n-8$. These two components intersect in a subvariety of dimension $3n+7$. Thus, the separating variety is not connected in codimension
\[(4n+4)-(3n+8) = n-4\] and by Proposition \ref{grothbound} we have that any separating set for $\C[\vv]^G$ has cardinality at least
\[ 3n-8+(n-4) = 4n-12.\]

The case $l \geq 4$, $n \geq l+1$, is similar, in that there are two components of dimensions $2(l-1)(n+1)$ and $ln+l^2-1$ intersecting in a subvariety of dimension at most $ln+l^2-1$. So the separating variety is not connected in codimension $k$ for some 
\[k \leq 2(l-1)(n+1)-(ln+l^2-1) = ln-l^2+2l-2n\] and by Proposition \ref{grothbound} we have that any separating set for $\C[\vv]^G$ has cardinality at least
\[ ln-l^2+1+(ln-l^2+2l-2n) = (2l-2)n-2(l^2-l)\] as claimed. This completes the proof of Theorems \ref{main1l} and \ref{sepvar1l}.

\section{$2 \times 2$ Matrix semi-invariants}

In this section we speciallise to the case $l=m=2$. Thus, we set $\vv:= \M_{2,2}^n$ for some $n>0$ and $G = \SL_2(\C) \times \SL_2(\C)$. 
A generic element $\bA \in \vv$ will be written as $(A_1,A_2,\ldots,A_n)$ where
\[A_i = \begin{pmatrix} a_i & b_i \\ c_i & d_i \end{pmatrix}.\]
A generic element $(\bA,\bA') \in \vv \times \vv$ will be written with $\bA$ as above and $\bA' = (A'_1,A'_2,\ldots,A'_n)$ where
\[A'_i = \begin{pmatrix} a'_i & b'_i \\ c'_i & d'_i \end{pmatrix}.\]
Throughout we write $g \cdot \bA$ for the action of $g = (g_1,g_2) \in G$ on $\bA \in \vv$. We use the notation $h \star \bA$ for the commuting action of $h \in \GL_n$.

Our aim is to use \eqref{firstdecomp} to decompose $\mathcal{S}_{G,\vv}$. We begin by identifying non-stable points:

\begin{Lemma} Let $\bA \in \vv$. Then $A$ is not stable if and only there exists $g \in G$ such that $g \cdot A_i$ is simultaneously upper-triangular for all $i=1 \ldots, n$.
\end{Lemma}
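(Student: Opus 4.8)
The plan is to invoke the Hilbert--Mumford criterion: $\bA$ is not stable precisely when some non-trivial 1-parameter subgroup $\lambda$ of $G$ satisfies $w(\lambda,\bA)\le 0$, which (since $\bA\neq 0$, the zero tuple being immediate in both directions) is equivalent to $\lim_{t\to 0}\lambda(t)\cdot\bA$ existing in $\vv$. I would begin with two routine reductions. Every 1-parameter subgroup of $G=\SL_2(\C)\times\SL_2(\C)$ is $G$-conjugate to one landing in the diagonal maximal torus $T$, one has $w(g\lambda g^{-1},g\cdot\bA)=w(\lambda,\bA)$ because $g$ acts linearly, and both conditions in the statement are invariant under translating $\bA$ by $G$. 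Hence in each direction it suffices to take $\lambda=\mu\in T$, say $\mu(t)=(\diag(t^{a},t^{-a}),\diag(t^{b},t^{-b}))$ with $(a,b)\in\Z^{2}$. Under $\mu(t)$ the entries of each $A_i$ in positions $(1,1),(1,2),(2,1),(2,2)$ are scaled by $t^{a-b},t^{a+b},t^{-a-b},t^{-a+b}$, so $\lim_{t\to 0}\mu(t)\cdot\bA$ exists if and only if every position at which some $A_i$ has a nonzero entry carries nonnegative $\mu$-weight.

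For the implication $\Leftarrow$, I would use the hypothesis to translate $\bA$ so that all $A_i$ are upper triangular, hence have vanishing $(2,1)$-entry, and then apply $\mu$ with $a=b=1$. The $\mu$-weights at $(1,1),(1,2),(2,2)$ are $0,2,0$, all nonnegative, while the unique negatively weighted position, namely $(2,1)$, is identically zero; hence $\lim_{t\to 0}\mu(t)\cdot\bA$ exists and $\bA$ is not stable.

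For the implication $\Rightarrow$, I would start from a non-trivial $\mu\in T$, so $(a,b)\neq(0,0)$, with $\lim_{t\to 0}\mu(t)\cdot\bA$ existing. The four $\mu$-weights $a-b,\ a+b,\ -a-b,\ -a+b$ sum to zero and are not all zero (that would force $a=b=0$), so at least one of them is strictly negative, and the position it labels is then identically zero across all the $A_i$. This yields four cases: (i) the $(2,1)$-entries vanish, so $\bA$ is already upper triangular; (ii) the $(1,2)$-entries vanish, so $\bA$ is lower triangular and $(w_{0},w_{0})\cdot\bA$ is upper triangular, where $w_{0}=\left(\begin{smallmatrix}0&1\\-1&0\end{smallmatrix}\right)$; (iii) the $(1,1)$-entries vanish, so $(w_{0},\id)\cdot\bA$ is upper triangular; (iv) the $(2,2)$-entries vanish, so $(\id,w_{0})\cdot\bA$ is upper triangular. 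Each of (ii)--(iv) is a one-line $2\times 2$ matrix computation, and in every case a single element of $G$ brings $\bA$ to simultaneously upper-triangular form.

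The matrix computations, and the two standard facts cited at the outset, are routine; the step I expect to require the most care is the case analysis in $\Rightarrow$, in particular the possibility $a+b=0$. When $a+b\neq 0$ one of the \emph{off-diagonal} entries is forced to vanish, which is transparently a triangularity condition; but a destabilizing torus with $a+b=0$ (forcing $a=-b\neq 0$) instead annihilates a \emph{diagonal} entry, and one must observe that the simultaneous vanishing of all $(1,1)$-entries --- or, symmetrically, of all $(2,2)$-entries --- still yields simultaneous upper-triangularizability, via a left (respectively right) twist by the Weyl element, as in cases (iii) and (iv) above.
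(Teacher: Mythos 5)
Your proof is correct and takes essentially the same route as the paper: reduce via conjugacy to the diagonal torus of $G$, read off the weights $\pm a\pm b$ on the four entry positions, conclude that non-stability forces one of $\ba,\bb,\bc,\bd$ to vanish identically, and then bring each case to upper-triangular form by a Weyl-element twist. Your explicit case analysis (including the $a+b=0$ case killing a diagonal entry) merely spells out what the paper compresses into the remark that the four vanishing configurations lie in a single $G$-orbit.
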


\begin{proof} Every 1-parameter subgroup in $\SL_2(\C)$ is a maximal torus. Since the maximal tori are all conjugate in $\SL_2(\C)$, each is conjugate to the diagonal group
\[T:= \{\begin{pmatrix} t & 0 \\ 0 & t^{-1} \end{pmatrix}: t \in \C^*\}.\]
It follows that every 1-parameter subgroup in $G$ is conjugate to a group of the form
\[T_{r,s}: \{\lambda_{r,s}(t):=   \left(\begin{pmatrix} t^r & 0 \\ 0 & t^{-r} \end{pmatrix}, \begin{pmatrix} t^s & 0 \\ 0 & t^{-s} \end{pmatrix}   \right): t \in \C^* \}\] where $r,s \in \Z$.

For $\bA \in \vv$ we have
\[\lambda_{r,s}(t) \cdot \bA = \begin{pmatrix} t^{r+s} \ba & t^{r-s}\bb \\ t^{-r+s}\bc & t^{-r-s}\bd \end{pmatrix}.\]
It follows that $\bA$ is not stable for $T$ if and only if at least one of $\ba, \bb, \bc, \bd$ is zero. However it's clear that the sets of matrices with each of these entries zero lie in the same $G$-orbit, so $\bA$ is not stable for $G$ if and only if it is in the same $G$-orbit as an $n$-matrix $\bA'$ with  $\bc' = 0$ as required.
\end{proof}

Let $\ww$ denote the set of upper-triangular $n$-matrices in $\vv$. Then the set $\uu$ of non-stable points is $G \cdot \ww$. In the notation of Equation \eqref{firstdecomp} we have
\begin{equation}\label{sgvu} \mathcal{S}_{G,\vv,\uu} = \mathcal{S}_{G,\vv,G \cdot \ww} = (G \cdot \ww)^2 \cap \mathcal{S}_{G,\vv} = G^2 \cdot \mathcal{S}_{G,\vv,\ww}\end{equation}
where $\mathcal{S}_{G,\vv,\ww} = \ww^2 \cap \mathcal{S}_{G,\vv}$, the set of pairs of upper triangular $n$-matrices not separated by invariants, and the action of $G^2$ is separately on the two arguments. We have the following somewhat surprising description of $\mathcal{S}_{G,\vv,\ww}$:

\begin{Lemma}\label{isom} There exists an isomorphism of algebraic varieties
\[\Phi:\mathcal{S}_{G,\vv,\ww} \cong \mathcal{N}_{G,\vv} \times \C^{2n}.\]
\end{Lemma}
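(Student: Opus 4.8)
The plan is to build $\Phi$ as a composite of two isomorphisms. Let $\ww'\subseteq\ww$ be the closed subvariety of \emph{diagonal} $n$-matrices (those with all $b_i=0$), and set $\mathcal{S}_{G,\vv,\ww'}:=(\ww')^2\cap\mathcal{S}_{G,\vv}$. The first isomorphism will be $\mathcal{S}_{G,\vv,\ww}\cong\mathcal{S}_{G,\vv,\ww'}\times\C^{2n}$, where the $\C^{2n}$ records the two tuples $(b_1,\dots,b_n)$ and $(b'_1,\dots,b'_n)$ of strictly-upper-triangular entries; the second will be a linear isomorphism of ambient affine spaces identifying $\mathcal{S}_{G,\vv,\ww'}$ with $\mathcal{N}_{G,\vv}$.

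\emph{First step.} Write $\bA\in\ww$ as $A_i=\left(\begin{smallmatrix}a_i&b_i\\0&d_i\end{smallmatrix}\right)$ and let $\rho(\bA)\in\ww'$ be its diagonal part, $\rho(\bA)_i=\diag(a_i,d_i)$. The one-parameter subgroup $t\mapsto\bigl(\diag(t,t^{-1}),\diag(t^{-1},t)\bigr)$ sends $\bA$ to $\left(\begin{smallmatrix}a_i&t^2b_i\\0&d_i\end{smallmatrix}\right)_i$, which tends to $\rho(\bA)$ as $t\to0$; hence $\rho(\bA)\in\overline{G\bA}$, and since invariants are constant on orbit closures, $f(\bA)=f(\rho(\bA))$ for every $f\in\C[\vv]^G$. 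Consequently $f|_\ww$ depends only on the coordinates $a_i,d_i$, so a pair $(\bA,\bA')\in\ww^2$ lies in $\mathcal{S}_{G,\vv}$ if and only if $(\rho(\bA),\rho(\bA'))$ does. Thus $\mathcal{S}_{G,\vv,\ww}=(\rho\times\rho)^{-1}(\mathcal{S}_{G,\vv,\ww'})$, and as $\rho\times\rho$ is simply the coordinate projection forgetting the $b_i,b'_i$, this gives $\mathcal{S}_{G,\vv,\ww}\cong\mathcal{S}_{G,\vv,\ww'}\times\C^{2n}$. (Equivalently one can check this by computing the restrictions $\det(X_i)|_\ww=a_id_i$, $\langle X_i|X_j\rangle|_\ww=a_id_j+a_jd_i$, $\xi(X_i,X_j,X_k,X_l)|_\ww=-(a_ia_ld_jd_k+a_ja_kd_id_l)$, none involving the $b_i$.)

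\emph{Second step.} Identify $\ww'$ with $\C^{2n}$ via $\diag(a_i,d_i)\leftrightarrow(a_i,d_i)$ and define the linear isomorphism
\[
\phi\colon(\ww')^2=\C^{2n}\times\C^{2n}\ \xrightarrow{\ \sim\ }\ \M_{2,2}^n=\vv,\qquad(a_i,d_i;a'_i,d'_i)\mapsto\bB,\quad B_i=\begin{pmatrix}a_i&a'_i\\d'_i&d_i\end{pmatrix}.
\]
The claim is $\phi(\mathcal{S}_{G,\vv,\ww'})=\mathcal{N}_{G,\vv}$. For ``$\subseteq$'': first observe $\mathcal{N}_{G,\vv}$ is cut out set-theoretically by $\det(X_i)$ and $\langle X_i|X_j\rangle$ alone --- vanishing of all $\det(X_i)$ forces each matrix to have rank $\le1$, say $C_i=u_iv_i^t$, and then $\langle C_i|C_j\rangle=\det(u_i\mid u_j)\det(v_i\mid v_j)$, so its vanishing for all $i<j$ says each pair shares a column space or a row space; an elementary argument on four indices at a time forces that all the $C_i$ share a column space or all share a row space, which by the Hilbert--Mumford criterion (analysing the exponents in the displayed formula for $\lambda_{r,s}(t)\cdot\bA$, and using that this property is $G$-invariant) is exactly membership in $\mathcal{N}_{G,\vv}$. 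Now if $(a_i,d_i;a'_i,d'_i)\in\mathcal{S}_{G,\vv,\ww'}$, then in particular $\det(X_i)$ and $\langle X_i|X_j\rangle$ take equal values at $\diag(a_i,d_i)$ and $\diag(a'_i,d'_i)$, which says precisely $\det(B_i)=a_id_i-a'_id'_i=0$ and $\langle B_i|B_j\rangle=(a_id_j+a_jd_i)-(a'_id'_j+a'_jd'_i)=0$, so $\bB=\phi(\cdots)\in\mathcal{N}_{G,\vv}$. For ``$\supseteq$'': given $\bB\in\mathcal{N}_{G,\vv}$ with, say, a common row space, write $B_i=\mathbf{u}_iz^t$; then $\phi^{-1}(\bB)$ has $(a_i,d_i)=(u_{i1}z_1,u_{i2}z_2)$ and $(a'_i,d'_i)=(u_{i1}z_2,u_{i2}z_1)$, and when $z_1z_2\ne0$ the torus element $\bigl(\diag(z_2/z_1,z_1/z_2),I\bigr)\in\SL_2(\C)\times\SL_2(\C)$ carries $(\diag(a_i,d_i))_i$ to $(\diag(a'_i,d'_i))_i$, so these two diagonal $n$-matrices lie in one $G$-orbit and hence are not separated by invariants; when $z_1z_2=0$ both lie in $\mathcal{N}_{G,\vv}$ and are again not separated. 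The common-column-space case is symmetric, using a torus element composed with a Weyl-group element. In every case $\phi^{-1}(\bB)\in\mathcal{S}_{G,\vv,\ww'}$, proving the claim. Since $\phi$ is a linear isomorphism of affine spaces carrying the reduced variety $\mathcal{S}_{G,\vv,\ww'}$ onto the reduced variety $\mathcal{N}_{G,\vv}$, it restricts to an isomorphism; composing with the first step gives $\Phi$.

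The real content is the coordinate change $\phi$: once $(a_i,d_i)$ is placed on the diagonal and $(a'_i,d'_i)$ on the anti-diagonal of $B_i$, the condition ``the quadratic invariants agree at the two diagonal $n$-matrices'' becomes ``$\det(B_i)=0$ and $\langle B_i|B_j\rangle=0$'', and --- crucially --- these two families already define $\mathcal{N}_{G,\vv}$ set-theoretically, so the higher invariants $\xi$ never enter. The only genuinely combinatorial point is the four-index argument showing pairwise ``$u_i\parallel u_j$ or $v_i\parallel v_j$'' forces one alternative globally; the only place needing care is bookkeeping of the degenerate loci where a coordinate of $z$ (or of the common column vector) vanishes, but these lie inside $\mathcal{N}_{G,\vv}$ and cause no trouble.
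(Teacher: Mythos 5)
Your proof is correct, and although the underlying coordinate change is essentially the paper's map $\Phi$ (your $\phi$ differs from it only by signs, after the harmless reduction that splits off $\bb,\bb'$), the verification runs along a genuinely different route. The paper invokes the Derksen--Makam result that invariants of degree at most two cut out $\NGV$, and then still has to check agreement of the quartic generators $\xi$ from Proposition \ref{fft2} via an explicit identity expressing the $\xi$-difference as a combination of bracket-differences. Your argument needs neither the generating set nor that identity: for one inclusion you prove the degree-two cut-out of $\NGV$ directly (rank-one factorisation, the identity $\langle C_i|C_j\rangle=\det(u_i\,u_j)\det(v_i\,v_j)$, and the pairwise-to-global proportionality argument), and for the other you observe that each $\bB\in\NGV$ yields a pair of diagonal $n$-matrices lying in a single $G$-orbit (or both inside $\NGV$), so that \emph{all} invariants agree at once --- this is what makes the higher invariants never enter, and it is arguably more self-contained and more transparent than the paper's computation; the price is the combinatorial step, where the ``four indices at a time'' claim deserves its one-line proof (if $u_{i_0}\not\parallel u_{j_0}$, then for every $k$ with $C_k\neq 0$ either $u_k\not\parallel u_{i_0}$ or $u_k\not\parallel u_{j_0}$, forcing $v_k\parallel v_{i_0}$). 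One small slip: under the paper's convention $(g,h)\cdot A=gAh^{-1}$, your one-parameter subgroup $t\mapsto(\diag(t,t^{-1}),\diag(t^{-1},t))$ scales $a_i$ and $d_i$ and fixes $b_i$; the degeneration to the diagonal part is instead given by $t\mapsto(\diag(t,t^{-1}),\diag(t,t^{-1}))$, which acts by conjugation and sends $b_i\mapsto t^2b_i$. This is a convention issue only, and in any case your parenthetical check that the restricted generators do not involve the $b_i$ already secures the first step.
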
 

\begin{proof} Let $(\bA, \bA') \in \ww^2.$ Define an $n$-matrix $\bB = (B_1, \ldots, B_n) \in \vv$ as follows:
\[B_i = \begin{pmatrix} -a_i & a'_i \\ -d'_i & d_i \end{pmatrix}.\]
Then set $\Phi(\bA,\bA') = (\bB,\bb,\bb')$.
We claim that $\bB \in \mathcal{N}_{G,\vv}$ if and only if $(\bA,\bA') \in \mathcal{S}_{G,\vv}$. This proves the result, since we may freely choose $\bb$ and $\bb'$ without affecting $\bB$.

Now it follows from \cite[Corollary~3.3]{DerksenMakamDegreeBounds} that the set of invariants in $\C[\vv]^G$ with degree at most two form a zero-separating set. Therefore, we have $\bB \in \mathcal{N}_{G,\vv}$ if and only if
\[\det(B_i) = 0 \ \text{for all} \ 1 \leq i \leq n\]
and
\[\langle B_i|B_j \rangle = 0 \ \text{for all} \ 1 \leq i<j \leq n.\]
We now see that
\[\det(B_i) = -a_id_i + a'_id'_i = 0 \Leftrightarrow \det(A_i) = \det(A'_i),\]
and
\[\langle B_i | B_j \rangle= a_ia_j - a'_id'_j - d'_ia'_j +d_id_j - (d_i-a_i)(d_j-a_j) = -a'_id'_j -d'_ia'_j+a_id_j+d_ia_j = 0 \]
\[\Leftrightarrow -a'_id'_j -d'_ia'_j = -a_id_j-d_ia_j  \]
\[\Leftrightarrow a_ia_j+d_id_j - (a_i+d_i)(a_j+d_j) =  a'_ia'_j+d'_id'_j - (a'_i+d'_i)(a'_j+d'_j)   \]
\[\Leftrightarrow \langle A_i | A_j \rangle = \langle A'_i| A'_j \rangle.\]

This shows that $\bB \in \mathcal{N}_{G,\vv}$ whenever $(\bA,\bA') \in \mathcal{S}_{G,\vv,\ww}$, and that if $\bB \in \mathcal{N}_{G,\vv}$ we have $f(\bA)=f(\bA')$ for all $f \in \C[\vv]^G$ with degree $\leq 2$. To complete the proof it remains only to establish that $\xi(A_i,A_j,A_k,A_l) = \xi(A'_i,A'_j,A'_k,A'_l)$ whenever $\bB \in \mathcal{N}_{G,\vv}$. Now since $\bA,\bA' \in \ww$ and 
$\xi(A_i,A_j,A_k,A_l)$ is the coefficient of $e_ie_je_ke_l$ in the determinant
\[\begin{vmatrix} e_iA_i & e_jA_j\\ e_kA_k & e_lA_l \end{vmatrix} \]
\[= \begin{vmatrix} e_i a_i &e_i b_i & e_j a_j & e_j b_j\\
0 & e_i d_i & 0 & e_j d_j \\ e_k a_k & e_k b_k& e_l a_l & e_l b_l \\
0 & e_k d_k & 0 & e_l d_l \end{vmatrix} \]
\[= \begin{vmatrix} e_i a_i & e_j a_j &e_i b_i & e_j b_j\\
 e_k a_k & e_l a_l & e_k b_k& e_l b_l \\
0 &0 & e_i d_i  & e_j d_j \\
0 & 0 &e_k d_k & e_l d_l \end{vmatrix} \]
\[=(e_ie_la_ia_l-e_je_ka_ja_k)(e_ie_ld_id_l-e_je_kd_jd_k)\]
we have
\[\xi(A_i,A_j,A_k,A_l) = -a_ia_ld_jd_k-a_ja_kd_id_l,\]
and therefore
\[\xi(A_i,A_j,A_k,A_l) = \xi(A'_i,A'_j,A'_k,A'_l)\]
\[ \Leftrightarrow a_ia_ld_jd_k+a_ja_kd_id_l - a'_ia'_ld'_jd'_k-a'_ja'_kd'_id'_l = 0.\]4

Now we note that 
\begin{eqnarray*} a_ia_ld_jd_k+a_ja_kd_id_l - a'_ia'_ld'_jd'_k-a'_ja'_kd'_id'_l  &\\
= (a_kd_l+a_ld_k)(a_id_j+a_jd_i-a'_id'_j-a'_jd'_i) &\\
+(a_jd_l+a_ld_j)(a_id_k+a_kd_i-a'_id'_k-a'_kd'_i) &\\
-(a_id_l+a_ld_i)(a_jd_k+a_kd_j-a'_jd'_k-a'_kd'_j) &\\
-(a_jd_k+a_kd_j)(a_id_l+a_ld_i-a'_id'_l-a'_ld'_i) &\\
+(a_id_k+a_kd_i)(a_jd_l+a_ld_j-a'_jd'_l-a'_ld'_j) &\\
+(a_id_j+a_jd_i)(a_kd_l+a_ld_k-a'_kd'_l-a'_ld'_k).
\end{eqnarray*}

\begin{eqnarray*} 
= \langle A_k|A_l \rangle( \langle A_i | A_j \rangle - \langle A'_i | A'_j \rangle ) &\\
+ \langle A_j|A_l \rangle( \langle A_i | A_k \rangle - \langle A'_i | A'_k \rangle ) &\\
- \langle A_i|A_l \rangle( \langle A_j | A_k \rangle - \langle A'_j | A'_k \rangle ) &\\
- \langle A_j|A_k \rangle( \langle A_i | A_l \rangle - \langle A'_i | A'_l \rangle ) &\\
+ \langle A_i|A_k \rangle( \langle A_j | A_l \rangle - \langle A'_j | A'_l \rangle ) &\\
+ \langle A_i|A_j \rangle( \langle A_k | A_l \rangle - \langle A'_k | A'_l \rangle ) &\\
=0 &
\end{eqnarray*}
since $ \langle A_i | A_j \rangle = \langle A'_i | A'_j \rangle$ for all $1 \leq i<j \leq n$.  
\end{proof}

The decomposition of $\mathcal{N}_{G,\vv}$, where $G = \SL_l(\C) \times \SL_m(\C)$ acts in the usual way on $\vv = \M_{l,m}^n$ was studied by Burgin and Draisma \cite{BurginDraismaNullcone}. In the special case $l=m=2$, Theorem 1 in loc. cit. gives the following:

\begin{prop} The nullcone $\mathcal{N}_{G,\vv}$ decomposes as
\[\mathcal{N}_{G,\vv} = \mathcal{D}_r \cup \mathcal{D}_c\]
where
\[\mathcal{D}_r = \{\begin{pmatrix} \ba & \bb \\ \lambda \ba & \lambda \bb \end{pmatrix} \in \mathcal{N}_{G,\vv}: \ba, \bb \in \C^n, \lambda \in \C\}\]
and
\[\mathcal{D}_c = \{\begin{pmatrix} \ba & \lambda \ba \\  \bc & \lambda \bc \end{pmatrix} \in \mathcal{N}_{G,\vv}: \ba, \bc, \in \C^n, \lambda \in \C\}.\]

The sets $\mathcal{D}_r$ and $\mathcal{D}_c$ are closed and irreducible.
\end{prop}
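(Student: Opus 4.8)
The plan is to deduce the decomposition directly from the fact, recalled in the proof of Lemma~\ref{isom} via \cite[Corollary~3.3]{DerksenMakamDegreeBounds}, that the semi-invariants of degree at most two already form a zero-separating set: thus $\bA=(A_1,\dots,A_n)\in\mathcal{N}_{G,\vv}$ precisely when $\det(A_i)=0$ for all $i$ and $\langle A_i\,|\,A_j\rangle=0$ for all $i<j$. (One could instead run the Hilbert--Mumford criterion with the one-parameter subgroups $\lambda_{r,s}$ introduced above, but the degree-two route fits the setup at hand.)

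First I would unwind these two families of conditions. Vanishing of $\det(A_i)$ forces $\rk(A_i)\le1$, so we may write $A_i=u_iv_i^{t}$ with $u_i,v_i\in\C^2$, taking $u_i=0$ and $v_i$ arbitrary when $A_i=0$. A one-line computation --- using $\Tr(A_i)=v_i^{t}u_i$, $\Tr(A_iA_j)=(v_i^{t}u_j)(v_j^{t}u_i)$ and multiplicativity of the $2\times2$ determinant --- then gives the identity
\[
\langle A_i\,|\,A_j\rangle \;=\; \det\!\bigl(u_i\ u_j\bigr)\cdot\det\!\bigl(v_i\ v_j\bigr),
\]
where $(u_i\ u_j)$ denotes the $2\times2$ matrix with columns $u_i,u_j$. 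Hence $\bA\in\mathcal{N}_{G,\vv}$ if and only if, for every pair $i<j$, either $u_i,u_j$ are proportional or $v_i,v_j$ are.

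The key step is to promote this pairwise alternative to a global one: the factorizations can be chosen so that either all the $u_i$ are proportional to a single vector, or all the $v_i$ are. On the indices $i$ with $A_i\ne0$ the lines $[u_i],[v_i]\in\mathbb{P}^1$ are well defined, and one argues by contradiction: if $[u_a]\ne[u_b]$ for some $a,b$ then $[v_a]=[v_b]=:\ell$, and for any further index $c$ at least one of $[u_c]\ne[u_a]$, $[u_c]\ne[u_b]$ holds, forcing $[v_c]=\ell$; so $[v_\bullet]$ is constant. The indices with $A_i=0$ impose no constraint and the missing factor is taken in whichever common direction the others dictate. This yields $\mathcal{N}_{G,\vv}\subseteq\mathcal{D}_r\cup\mathcal{D}_c$, where $\mathcal{D}_r$ is the locus of $\bA$ admitting a common left factor (so $A_i=u\tilde v_i^{t}$ for all $i$, equivalently the $2\times2n$ matrix obtained by placing $A_1,\dots,A_n$ side by side has rank $\le1$) and $\mathcal{D}_c$ the locus with a common right factor. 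The reverse inclusion is immediate from the displayed identity: a common $u$ (resp.\ a common $v$) kills every $\langle A_i\,|\,A_j\rangle$, and every $\det(A_i)$ vanishes automatically, so the ``$\in\mathcal{N}_{G,\vv}$'' in the definitions is redundant. Finally $\mathcal{D}_r$ and $\mathcal{D}_c$ are closed, being cut out by the $2\times2$ minors of a $2\times2n$ matrix, and irreducible, being generic determinantal varieties --- equivalently, the images of the irreducible variety $\C^2\times(\C^2)^n$ under $(u,(v_i))\mapsto(uv_i^{t})_i$, or affine cones over Segre varieties $\mathbb{P}^1\times\mathbb{P}^{2n-1}$.

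The argument is largely mechanical once the identity above is in hand; the step needing the most care is the passage from pairwise to global proportionality, together with the bookkeeping of the degenerate cases --- the vanishing $A_i$, and the points of $\mathcal{D}_r$ with $u$ proportional to $e_2$, which are the ``$\lambda=\infty$'' limits of the chart written in the statement and must be accounted for inside the determinantal variety. The two standard inputs, multiplicativity of $2\times2$ determinants and irreducibility of generic determinantal varieties, are used without reproof.
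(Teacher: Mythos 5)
Your proof is correct, but it takes a genuinely different route from the paper, which offers no argument of its own: the proposition is quoted as the special case $l=m=2$ of Theorem~1 of Burgin and Draisma \cite{BurginDraismaNullcone}. You instead give a direct proof: membership in the nullcone is reduced, via \cite[Corollary~3.3]{DerksenMakamDegreeBounds} (the same input the paper uses in the proof of Lemma~\ref{isom}), to the vanishing of the quadratic invariants; your identity $\langle A_i\,|\,A_j\rangle=\det(u_i\,u_j)\det(v_i\,v_j)$ for rank-one factorisations $A_i=u_iv_i^t$ is correct (it is the polarisation $\det(A_i+A_j)=\det A_i+\det A_j+\langle A_i\,|\,A_j\rangle$ combined with multiplicativity of the $2\times 2$ determinant); the pairwise-to-global proportionality argument is sound, as are the reverse inclusion and the closedness/irreducibility via determinantal varieties. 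What the citation buys the paper is the statement for general $l,m$ together with the stronger fact that $\mathcal{D}_r$ and $\mathcal{D}_c$ are precisely the irreducible components of the nullcone; what your argument buys is a short, self-contained proof in the only case the paper needs, using tools it already relies on. You are also right to insist on reading $\mathcal{D}_r$ and $\mathcal{D}_c$ as the rank-$\le 1$ loci (the two rows of $(A_1|\cdots|A_n)$ proportional, respectively the two columns of the vertically stacked $2n\times 2$ matrix proportional) rather than literally as the finite-$\lambda$ charts displayed in the statement: taken literally, those sets are not closed, and their union misses nullcone points such as $\ba=\bb=0$ with $\bc,\bd$ linearly independent; under the determinantal reading, which includes your ``$\lambda=\infty$'' stratum and is clearly the intended one, your verification of the decomposition is complete.
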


Now using equation \eqref{sgvu} and Lemma \ref{isom} we obtain a decomposition of $\mathcal{S}_{G,\vv}$:

\begin{equation}\label{sgvdecomp}
\mathcal{S}_{G,\vv} = \overline{\Gamma_{G,\vv}} \cup G^2 \cdot \cc_r \cup G^2 \cdot \cc_c
\end{equation}
where $$\cc_r:= \Phi^{-1}( \mathcal{D}_r) = \{ \left( \begin{pmatrix} \ba & \bb \\ 0 &  \lambda \bd' \end{pmatrix},  \begin{pmatrix} \lambda \ba & \bb' \\ 0 &  \bd' \end{pmatrix}\right)\in \mathcal{S}_{G,\vv}: \ba, \bb, \bb', \bd' \in \C^n, \lambda \in \C\}$$
and
$$\cc_c:= \Phi^{-1}( \mathcal{D}_c) = \{\left(\begin{pmatrix} \ba & \bb \\ 0 &  \lambda \ba' \end{pmatrix},  \begin{pmatrix} \ba' & \bb' \\ 0 &  \lambda \ba \end{pmatrix} \right) \in \mathcal{S}_{G,\vv}: \ba, \ba' \bb, \bb' \in \C^n, \lambda \in \C\}.$$

We claim that $G^2 \cdot \cc_c$ and $G^2 \cdot \cc_r$ are closed (if so, it's clear they are also irreducible, being the orbit of a connected group on a vector space). We will need the following lemma, which seems to be well-known and was shown to me by Harm Derksen. For lack of a good reference we provide a proof:

\begin{Lemma}\label{harm}
Let $G$ be a linear algebraic group over $\kk$ and $B \leq G$ a Borel subgroup. Let $V$ be a vector space over $\kk$ on which $G$ acts linearly and let $W$ be a subspace of $V$. Suppose $B \cdot W \subseteq W$. Then $G \cdot W$ is closed.
\end{Lemma}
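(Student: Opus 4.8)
The plan is to exploit the standard fact that the flag variety $G/B$ is complete (proper over $\kk$), which converts the closedness of $G\cdot W$ into the closedness of the image of a proper map. First I would observe that the hypothesis $B\cdot W\subseteq W$ means $W$ is a $B$-submodule of $V$, so the stabiliser of $W$ in the Grassmannian $\mathrm{Gr}(\dim W, V)$ — call it $P$ — is a closed subgroup of $G$ containing $B$, hence $P$ is parabolic and $G/P$ is complete. Thus the orbit $G\cdot [W]$ in the Grassmannian is the image of the complete variety $G/P$ under the orbit map, so it is closed; in particular it is a complete subvariety of $\mathrm{Gr}(\dim W, V)$.

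Next I would set up the incidence construction. Let $r=\dim W$ and consider the tautological bundle on $\mathrm{Gr}(r,V)$, i.e. the closed subvariety $\widetilde{E}=\{(U,v)\in \mathrm{Gr}(r,V)\times V: v\in U\}$ with its projections $p:\widetilde E\to \mathrm{Gr}(r,V)$ and $q:\widetilde E\to V$. The projection $p$ is proper (it is a vector bundle, in particular a projective — indeed affine over the base, but the base is complete so the total space restricted over a complete base is complete) — more carefully: $p$ is proper because $\mathrm{Gr}(r,V)$ is complete and $\widetilde E$ is closed in $\mathrm{Gr}(r,V)\times \mathbb{A}(V)$... that is not quite enough since $\mathbb{A}(V)$ is not complete. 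Instead I would argue directly: $G\cdot W=q\bigl(p^{-1}(G\cdot[W])\bigr)$, and $p^{-1}(G\cdot[W])$ is the total space of the tautological bundle restricted to the \emph{complete} variety $G\cdot[W]$; this restricted bundle maps to $V$ by $q$, and since a vector bundle over a complete variety is a complete-fibred... — the clean statement I want is: the map from the total space of a vector bundle over a complete base into an affine space, given fibrewise by a linear inclusion, has closed image, because it factors as (projection to base, which has complete fibres) composed appropriately; concretely $q|:p^{-1}(G\cdot[W])\to V$ is proper since it is the composition of the closed immersion $p^{-1}(G\cdot[W])\hookrightarrow (G\cdot[W])\times V$ with the projection $(G\cdot[W])\times V\to V$, and the latter is proper because $G\cdot[W]$ is complete. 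Therefore $G\cdot W$, being the image of a proper morphism, is closed in $V$.

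Finally I would package this: $G\cdot W=\bigcup_{g\in G} g\cdot W=\bigcup_{[U]\in G\cdot[W]} U$ (using that $g\cdot W$ depends only on the point $[g\cdot W]=g\cdot[W]$ of the Grassmannian), and the right-hand side is exactly $q(p^{-1}(G\cdot[W]))$, the image of the proper map just described, hence closed. This completes the proof.

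The main obstacle is the first step — showing $P=\{g\in G: g\cdot W=W\}$ is parabolic and hence $G/P$ complete. This is where $B\cdot W\subseteq W$ is used: it forces $B\le P$, and a closed subgroup of $G$ containing a Borel is parabolic, so $G/P$ is complete (projective) by the structure theory of algebraic groups. Everything after that is a formal argument with properness and tautological bundles; the one point to be careful about is that $q$ restricted to the tautological bundle over a \emph{complete} locus is proper, which follows because properness is preserved under base change and composition and the projection from a product with a complete factor is proper. I should also note the statement as given does not require $G$ connected, but $G/B$ being complete for a Borel $B$ holds for any linear algebraic group, so no extra hypothesis is needed.
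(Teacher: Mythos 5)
Your argument is correct, and it rests on the same completeness/properness principle as the paper's proof — realise $G\cdot W$ as the image of a closed incidence correspondence over a complete base, so that projection to $V$ is a closed map — but you package it differently, via the Grassmannian. You first show the stabiliser $P$ of $[W]\in\mathrm{Gr}(\dim W,V)$ contains $B$, hence is parabolic, so $G/P$ and therefore the orbit $G\cdot[W]$ are complete and closed in the Grassmannian, and then collapse the tautological incidence variety over $G\cdot[W]$ by the projection to $V$. The paper is more economical: it works directly over the flag variety, taking $Z=\{(gB,v): g^{-1}v\in W\}\subseteq G/B\times V$ (the hypothesis $B\cdot W\subseteq W$ is used only to make this set well defined), observing that $Z$ is closed, that $G\cdot W=\pi(Z)$, and that $\pi:G/B\times V\to V$ is a closed map since $G/B$ is complete. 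Thus the paper needs neither the notion of parabolic subgroup nor the closedness of the Grassmannian orbit, while your route invokes these extra (standard) facts but obtains them as by-products — e.g.\ that $G\cdot[W]$ is a closed $G$-orbit and that the parameter space may be taken to be $G/P$ rather than $G/B$. One cosmetic remark: the digression about why the tautological bundle projection is proper can simply be deleted, since your final formulation (closed immersion of the restricted incidence variety into $(G\cdot[W])\times V$ followed by the projection, which is closed because the first factor is complete) is exactly the argument required, and it also covers the non-connected case just as you note.
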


\begin{proof} Consider the set $Z:= \{(gB,v): g^{-1}v \in W\} \subseteq G/B \times V$. Let $\pi$ be the projection $G/B \times V \rightarrow V$. Then $G \cdot W = \pi(Z)$. As $G/B$ is a projective, hence complete, variety, the map $\pi$ is projective and takes closed sets to closed sets. Clearly $Z$ is closed, so $G \cdot W$ is closed as required.
\end{proof}


Now an easy direct calculation shows that $\cc_r$ and $\cc_c$ are stable under the action of $B^2$. Since this is a Borel subgroup of the linear algebraic group $G^2$, the claim is proved.

The dimension of both $\cc_c$ and $\cc_r$ is $4n+1$. Since $\dim(B^2) = 8$ and $B^2$ is in fact the stabliser of both, we get that the dimension of $G^2 \cdot \cc_r$ is $$\dim(G^2) + \dim(\cc_r) - \dim(B^2) = 12 + (4n+1) - 8 = 4n+5$$ and similarly for $G^2 \cdot \cc_c$. There are now two possibilities. The first is that $G^2 \cdot \cc_r$ and $G^2 \cdot \cc_c$ are contained inside $\overline{\Gamma_{G,\vv}}$, and hence $\mathcal{S}_{G,\vv} = \overline{\Gamma_{G,\vv}}$ is irreducible. In that case, our methods tell us nothing about the minimum size of a separating set. The second is that one or both of these components is not contained in $\overline{\Gamma_{G,\vv}}$. In that case, $\mathcal{S}_{G,\vv}$ has at least two components of different dimensions, one of which has dimension smaller than $4n+6 = 2 \dim(V) - \dim(\kvg)$. In that case, the smaller component has codimension strictly greater than $\kvg$ and, by Proposition \ref{krullbound}, no polynomial separating set exists.

Note that for $n \leq 3$ we must have $\mathcal{S}_{G,\vv} = \overline{\Gamma_{G,\vv}}$. This is because for $n \leq 3$, $\kvg$ is a polynomial ring, so it certainly contains a polynomial separating set, ruling out the second option.

So the final step in our argument is to describe $G^2 \cdot \cc_r \cap \overline{\Gamma_{G,\vv}}$ and $G^2 \cdot \cc_c \cap \overline{\Gamma_{G,\vv}}$. Note that since $G^2$ preserves $\overline{\Gamma_{G,\vv}}$ this is the same as describing $G^2 \cdot (\cc_r \cap \overline{\Gamma_{G,\vv}})$ and $G^2 \cdot (\cc_c \cap \overline{\Gamma_{G,\vv}})$. 

Now we claim:

\begin{Lemma}\label{rank} Let $(\bA,\bA') \in \mathcal{S}_{G,\vv,\ww}$. Define a $6 \times n$ matrix as follows:
\[m_{\bA,\bA'} = \begin{pmatrix} \ba \\ \bb \\ \bd \\ \ba' \\ \bb' \\ \bd' \end{pmatrix}.\]Then $(\bA,\bA') \in \overline{\Gamma_{G,\vv}}$ if and only if $\rk(m_{\bA,\bA'}) \leq 3$.
\end{Lemma}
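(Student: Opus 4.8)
The plan is to mirror the structure of the $m=1$ argument (Lemma \ref{rankbound} and the Proposition following it), but now with the $6\times n$ matrix $m_{\bA,\bA'}$ playing the role of the stacked matrix $A|A'$. One direction — that $(\bA,\bA')\in\overline{\Gamma_{G,\vv}}$ forces $\rk(m_{\bA,\bA'})\le 3$ — should follow exactly as in Lemma \ref{rankbound}: if $(\bA,\bA')$ is a limit $\lim_{t\to 0}(\bA(t), g(t)\cdot\bA(t))$, then writing out the entries of $g(t)\cdot\bA(t)$ with $g(t)=(g_1(t),g_2(t))\in\SL_2\times\SL_2$, each of the six rows $\ba'(t),\bb'(t),\bd'(t)$ is a fixed $\C$-linear combination (with coefficients built from the entries of $g_1(t),g_2(t)$) of the three rows $\ba(t),\bb(t),\bc(t)$ of $A(t)$. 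But since $(\bA,\bA')\in\ww^2$ we have $\bc=\bc'=0$, and one should first reduce — using that $\overline{\Gamma_{G,\vv}}$ is $G^2$-stable and that non-stable orbits can be moved into $\ww$ — to the situation where the approximating tuples $\bA(t),\bA'(t)$ also have vanishing (or negligible) lower-left entries, so that effectively all six limiting rows lie in a fixed $3$-dimensional space spanned by $\ba(t),\bb(t),\bd(t)$. Taking $t\to 0$ and using that bounded rank is a closed condition gives $\rk(m_{\bA,\bA'})\le 3$. The subtlety here, and the point to be careful about, is that $\bc(t)$ need not vanish identically even if $\bc\equiv 0$ in the limit, so one must argue that after a suitable $B^2$-reduction the contribution of $\bc(t)$ can be absorbed — this is the one genuinely delicate point in this direction.

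For the converse — $\rk(m_{\bA,\bA'})\le 3$ implies $(\bA,\bA')\in\overline{\Gamma_{G,\vv}}$ — I would construct explicit one-parameter degenerations exactly in the spirit of the $l=3$ case of the Proposition above. Given $(\bA,\bA')\in\mathcal{S}_{G,\vv,\ww}$ with $\rk(m_{\bA,\bA'})\le 3$, the six vectors $\ba,\bb,\bd,\ba',\bb',\bd'\in\C^n$ span a space of dimension $\le 3$, so there are nontrivial linear relations between the ``$\bA$-rows'' and the ``$\bA'$-rows''. I would use the torus $T_{r,s}$ and unipotent elements of $B^2$ to first normalise $\bA$ and $\bA'$ (e.g.\ moving to cases where certain rows vanish or coincide), then exhibit a path $t\mapsto(\bA(t), g(t)\cdot\bA(t))$ with $g(t)\in B^2$ built from $\diag(t^r,t^{-r})$-type factors and $t^{-k}$ off-diagonal entries, whose limit as $t\to 0$ is $(\bA,\bA')$. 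The $2\times 2$ block structure means each $A_i$ contributes a row pair, and the upper-triangularity is exactly what lets a single choice of $g(t)$ work simultaneously for all $i$; the rank-$\le 3$ hypothesis is what guarantees the relevant common vectors exist so that the limit closes up. One must also separately handle degenerate sub-cases (where $\ba,\bb,\bd$ themselves fail to be independent, or where $\bd'$ or $\bb'$ is forced to be proportional), just as the Proposition splits into $\dim\langle\ba_1,\ba_2\rangle<2$ versus $=2$.

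The main obstacle, I expect, is the converse direction: unlike the $l=3$ computation above, here the two $\SL_2$ factors act on the left and the right of each $A_i$ simultaneously, so the degeneration $g(t)\cdot\bA(t)$ has the form $(g_1(t)A_i(t)g_2(t)^{-1})_i$, and one has less freedom — the same $g_1(t), g_2(t)$ must simultaneously realise the required row relations for every index $i$. The key realisation that makes this tractable is that $\cc_r$ and $\cc_c$ already record the precise shape of pairs in $\mathcal{S}_{G,\vv,\ww}$ (via Lemma \ref{isom} and the Burgin–Draisma decomposition of $\mathcal{N}_{G,\vv}$ into $\dd_r\cup\dd_c$), so it suffices to verify the rank criterion and construct the degeneration on the two explicit families $\cc_r$ and $\cc_c$ separately, where the entries are given by the parameters $\ba,\bb,\bb',\bd'$ (resp.\ $\ba,\ba',\bb,\bb'$) and $\lambda$ — on each family the needed one-parameter subgroup can be written down essentially uniformly in terms of $\lambda$ and powers of $t$, reducing the whole lemma to two finite explicit computations plus the degenerate sub-cases.
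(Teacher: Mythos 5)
There is a genuine gap, and it sits in the converse direction, which is where the real content of the lemma lies. Your plan there is to hand-build one-parameter degenerations on the two families $\cc_r$ and $\cc_c$, but you never construct them: you yourself identify the obstruction (a single pair $(g_1(t),g_2(t))$ must realise the required row relations simultaneously for all $n$ slots, with the two $\SL_2$ factors acting on opposite sides) and then simply assert that on each family the subgroup "can be written down essentially uniformly in terms of $\lambda$ and powers of $t$". That assertion is precisely what needs proving, so as it stands this direction is a plan rather than a proof. The paper avoids all of this with a short structural argument you have missed: the commuting action $h\star$ of $\GL_n(\C)$ from \eqref{commuting action} preserves both $\mathcal{S}_{G,\vv}$ and $\overline{\Gamma_{G,\vv}}$, and if $\rk(m_{\bA,\bA'})\leq 3$ one can choose $h\in\GL_n(\C)$ so that all components of $h\star\bA$ and $h\star\bA'$ beyond the third vanish; this reduces the claim to the case $n=3$, where $\mathcal{S}_{G,\M_{2,2}^3}=\overline{\Gamma_{G,\M_{2,2}^3}}$ because $\C[\M_{2,2}^3]^G$ is a polynomial ring and hence contains a polynomial separating set (the dichotomy discussed just before the lemma). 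No explicit degenerations are needed at all, and no case analysis over $\cc_r$, $\cc_c$ or degenerate sub-cases.

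In the forward direction your outline does coincide with the paper's (limits along curves in $\Gamma_{G,\vv}$ plus closedness of the bounded-rank locus), and the point you flag -- that $\bc(t)$ need not vanish along the approximating curve, so the six limiting rows are a priori only confined to a $4$-dimensional span -- is a real one; but you leave it unresolved ("one must argue that the contribution of $\bc(t)$ can be absorbed"), and you do not supply the $B^2$-reduction you invoke. For comparison, the paper's own computation simply writes the rows of $\bA'(t)$ as combinations of $\ba(t),\bb(t),\bd(t)$ with no $\bc(t)$ terms, so it does not visibly dispose of this issue either; still, deferring it does not constitute a proof, and if you pursue your route you need either to justify that the curve can be taken with negligible $\bc(t)$-contribution or to find an argument that bounds the rank of the limit directly.
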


\begin{proof} $\Leftarrow$: Recall that the diagonal commuting action of  $\GL_n(\C)$ on $\vv$ preserves $\overline{\Gamma_{G,\vv}}$, so it is enough to show that there exists $h \in \GL_n(\C)$ such that $(h \star \bA, h \star \bA') \in \overline{\Gamma_{G,\vv}}$.
Suppose $m_{\bA,\bA'}$ has rank $\leq 3$ and $(\bA, \bA') \not \in \overline{\Gamma_{G,\vv}}$.
Write $\bB = h \star \bA = (B_1,B_2, \ldots, B_n)$, and  $\bB' = h \star \bA' = (B'_1,B'_2, \ldots, B'_n)$.
 Now if $\rk(m_{\bA,\bA'})$, then there exists $h \in \GL_n(\C)$ such that $B_i = B'_i = 0$ for $i>3$. So by restricting to the first three arguments we obtain a pair 
$$(\bB,\bB') \in \mathcal{S}_{G,\M_{2,2}^3} \setminus \overline{\Gamma_{G,\M_{2,2}^3}}.$$ By the discussion preceding this Lemma, that is impossible. 

$\Rightarrow:$ This is trivial if $n \leq 3$, so assume $n>3$. Suppose $(\bA,\bA') \in \overline{\Gamma_{G,\vv}}$. This means that there exist functions $\bA(t),\bA'(t): \C^* \rightarrow \vv$ and $g(t), g'(t): \C^* \rightarrow \SL_2(\C)$ such that 
\begin{itemize}
\item $\bA'(t) = (g(t),g'(t))\cdot \bA(t)$ for all $t \in \C^*$;
\item $\lim_{t \rightarrow 0}\bA(t) = \bA, \lim_{t \rightarrow 0} \bA'(t) = \bA'$.
\end{itemize}

Write
\[g(t) = \begin{pmatrix} w(t) & x(t) \\ y(t) & z(t) \end{pmatrix}, g'(t) = \begin{pmatrix} w'(t) & x'(t) \\ y'(t) & z'(t) \end{pmatrix}\]
and
\[\bA(t) = \begin{pmatrix} \ba(t) & \bb(t) \\ \bc(t) & \bd(t) \end{pmatrix}, \bA'(t) = \begin{pmatrix} \ba'(t) & \bb'(t) \\ \bc'(t) & \bd'(t) \end{pmatrix}\] where throughout we abuse notation by using the same letter for a function and its limit. 

Then
\[m_{\bA,\bA'} = \lim_{t \rightarrow 0} m_{\bA(t),\bA'(t)}\]
and
\[m_{\bA(t),\bA'(t)} = \begin{pmatrix} \ba(t) \\ \bb(t) \\ \bd(t) \\ w(t)w'(t) \ba(t) + w(t)y'(t) \bb(t) + x(t)y'(t) \bd(t) \\ 
w(t)x'(t) \ba(t) + w(t)z'(t) \bb(t) + x(t) z'(t) \bd(t) \\ y(t)x'(t) \ba(t) + y(t) z'(t) \bb(t) + z(t) z'(t) \bd(t) \end{pmatrix}.\]
It is clear that $\rk(m_{\bA(t), \bA'(t)}) \leq 3$ for all $t \in \C^*$, since the last three rows are linear combinations of the first three. Since the set of $6 \times n$ matrices over $\C$ with rank bounded above  by some fixed number is closed, it follows that $\rk(m_{\bA,\bA'}) \leq 3$. 
\end{proof}

\begin{rem} For $n>3$, one can easily give examples of elements of $\cc_r$ or $\cc_c$ such that $\rk(m_{\bA,\bA'}) = 4$. Such elements will not lie in $\overline{\Gamma_{G,\vv}}$. Thus we have show that for $n>3$, $\mathcal{S}_{G,\vv}$ contains two components with dimension strictly less than $2- \dim(\kvg)$, i.e. codimension $> \kvg$. This shows that for $n>3$, no polynomial separating set for $\kvg$ exists. 
\end{rem}

To every element of $(\bA,\bA') \in \cc_r$ we associate a $4 \times n$ matrix 
\[m^r_{\bA,\bA'} = \begin{pmatrix} \ba \\ \bb \\ \bb' \\ \bd' \end{pmatrix}.\] Similarly to every element of $(\bA,\bA') \in \cc_c$ we associate a $4 \times n$ matrix 
\[m^c_{\bA,\bA'} = \begin{pmatrix} \ba \\ \bb \\ \bb' \\ \ba' \end{pmatrix}.\] 

Now from Lemma \ref{rank} we obtain:

\begin{cor}\
\begin{enumerate}
\item[(a)] $(\bA,\bA') \in \overline{\Gamma_{G,\vv}} \cap \cc_r$ if and only if $\rk(m^r_{\bA,\bA'}) \leq 3$;

\item[(b)] $(\bA,\bA') \in \overline{\Gamma_{G,\vv}} \cap \cc_c$ if and only if $\rk(m^c_{\bA,\bA'}) \leq 3$.
\end{enumerate}
\end{cor}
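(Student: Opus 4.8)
The plan is to deduce the corollary directly from Lemma~\ref{rank} by relating, for a pair $(\bA,\bA')$ lying in $\cc_r$ (resp.\ $\cc_c$), the $6\times n$ matrix $m_{\bA,\bA'}$ to the smaller $4\times n$ matrix $m^r_{\bA,\bA'}$ (resp.\ $m^c_{\bA,\bA'}$). Recall that an element of $\cc_r$ has the shape
\[\left(\begin{pmatrix} \ba & \bb \\ 0 & \lambda \bd' \end{pmatrix}, \begin{pmatrix} \lambda \ba & \bb' \\ 0 & \bd' \end{pmatrix}\right),\]
so its row vector $\bc = 0$, $\bd = \lambda \bd'$, $\ba' = \lambda \ba$, and $\bc' = 0$. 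First I would write $m_{\bA,\bA'}$ out explicitly for such an element: three of its six rows are $\bc=0$, $\bd = \lambda\bd'$ and $\ba' = \lambda\ba$, each of which lies in the row span of $\{\ba,\bb,\bb',\bd'\}$ — indeed the zero row and the two scalar multiples contribute nothing new. Hence $\rk(m_{\bA,\bA'}) = \rk(m^r_{\bA,\bA'})$. By Lemma~\ref{rank}, $(\bA,\bA') \in \overline{\Gamma_{G,\vv}}$ iff $\rk(m_{\bA,\bA'}) \le 3$, which is now equivalent to $\rk(m^r_{\bA,\bA'}) \le 3$. This proves (a).

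For (b) the argument is identical in structure. An element of $\cc_c$ has the form
\[\left(\begin{pmatrix} \ba & \bb \\ 0 & \lambda\ba' \end{pmatrix}, \begin{pmatrix} \ba' & \bb' \\ 0 & \lambda\ba \end{pmatrix}\right),\]
so $\bc = 0$, $\bd = \lambda\ba'$, $\bc' = 0$, $\bd' = \lambda\ba$. Again the rows $\bc,\bd,\bd'$ of $m_{\bA,\bA'}$ all lie in the span of $\{\ba,\bb,\bb',\ba'\}$ (the zero row and two scalar multiples of rows already present), so $\rk(m_{\bA,\bA'}) = \rk(m^c_{\bA,\bA'})$, and Lemma~\ref{rank} finishes the proof.

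There is one point that needs a little care rather than a genuine obstacle: Lemma~\ref{rank} is stated for pairs in $\mathcal{S}_{G,\vv,\ww}$, i.e.\ pairs of \emph{upper triangular} $n$-matrices not separated by invariants. Since $\cc_r = \Phi^{-1}(\dd_r)$ and $\cc_c = \Phi^{-1}(\dd_c)$ are by construction subsets of $\mathcal{S}_{G,\vv,\ww}$ (their defining matrices are upper triangular and they are required to lie in $\mathcal{S}_{G,\vv}$), the lemma applies verbatim to their elements, so no extra work is needed there. The only genuinely computational content is the observation that $\bd,\ba',\bc$ (resp.\ $\bd,\bd',\bc$) are redundant rows; this is immediate from the explicit parametrisations of $\cc_r$ and $\cc_c$ given just before Lemma~\ref{harm}, so I expect the whole proof to be a few lines.
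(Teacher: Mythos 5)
Your proof is correct and is exactly the deduction the paper intends (the corollary is stated there as an immediate consequence of Lemma~\ref{rank}): on $\cc_r$ the rows $\bd=\lambda\bd'$ and $\ba'=\lambda\ba$ of $m_{\bA,\bA'}$ are redundant, and on $\cc_c$ the rows $\bd=\lambda\ba'$ and $\bd'=\lambda\ba$ are, so $\rk(m_{\bA,\bA'})$ equals $\rk(m^r_{\bA,\bA'})$ resp.\ $\rk(m^c_{\bA,\bA'})$. Your only slip is listing $\bc$ among the six rows of $m_{\bA,\bA'}$ (its rows are $\ba,\bb,\bd,\ba',\bb',\bd'$; the rows $\bc,\bc'$ vanish on $\ww$ and are not included), which is harmless since a zero row does not affect the rank.
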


Consequently the dimension of $\overline{\Gamma_{G,\vv}} \cap \cc_r$ is $[3(n-3)+12]+1 = 3n+4$: we choose 4 vectors  $\ba,\bb,\bb',\bd'$ spanning a subspace of $\C^n$ with dimension 3, and are left with a free choice of $\lambda$ to fix $\ba'$ and $\bd$. The dimension of $\overline{\Gamma_{G,\vv}} \cap \cc_c$ is the same for similar reasons. Since the stabiliser of either is $B^2$ we have
\[\dim(\overline{\Gamma_{G,\vv}} \cap (G^2 \cdot \cc_r)) = \dim(\overline{\Gamma_{G,\vv}} \cap (G^2 \cdot \cc_r)) = 3n+8.\]

In addition we note that
\[\cc_r \cap \cc_c = \{\left(\begin{pmatrix} \ba & \bb \\ 0 &  \mu \lambda \ba \end{pmatrix},  \begin{pmatrix} \lambda \ba & \bb' \\ 0 &  \mu \ba \end{pmatrix} \right): \ba, \bb, \bb' \in \C^n, \lambda, \mu \in \C\}\] with dimension $3n+2$. It follows that
\[\dim(G^2 \cdot \cc_r \cap G^2 \cdot \cc_c) = 3n+6.\]
Furthermore, $\rk(m_{\bA,\bA'}) \leq 3$ for all $(\bA,\bA') \in \cc_r \cap \cc_c$, so that by Lemma \ref{rank} we get $\cc_c \cap \cc_r \subseteq \overline{\Gamma_{G,\vv}}$. Consequently we have $(G^2 \cdot \cc_c) \cap (G^2 \cdot \cc_r) \subseteq \overline{\Gamma_{G,\vv}}$ also. This completes the proof of Theorem \ref{sepvarstructure}.

The structure of the separating variety is described in the following ``Venn'' diagram:

\begin{center}
\begin{tikzpicture}[scale=2.5]

\draw (2,2) circle[radius=1.5]; 
\draw (4,2) circle[radius=1.5];
\draw (3,0.5) circle[radius = 1.5];
\node at (2,3){ $\overline{\Gamma_{G,\vv}}$};
\node at (1.5,2){$4n+6$};
\node at (4,3){$G^2 \cdot \cc_r$};
\node at (4.5,2){$4n+5$};
\node at (3,0.5){$4n+5$};
\node at (3,-0.5){$G^2 \cdot \cc_c$};
\node at (3,2.4){$3n+8$};
\node at (2.2,1.1){$3n+8$};
\node at (3.8, 1.1){$3n+6$};
\node at (3,1.5){$3n+6$};
\end{tikzpicture}
\end{center}

In the diagram above, the number in a given region represents the dimension of the smallest intersection of the components containing that region. The analogous construction to the usual set-theoretic Venn diagram would have the numbers in a region representing the dimension of that region, but that does not make sense because not all regions in the diagram represent varieties.

One can see from the diagram that for $n>3$, $\mathcal{S}_{G,\vv}$ is not connected in codimension $n-3$. Applying Proposition \ref{grothbound} we find that the minimum possible size of a separating set for $\kvg$ is then
\[\dim(\kvg)+n-3 = 4n-6+n-3 = 5n-9.\]
This completes the proof of Theorem \ref{main22}. The table below compares this lower bound with the size of the separating set $S_n$ given in Proposition \ref{fft2}:
\begin{center}
\begin{table}[h]
\begin{tabular}{c|cccc}
$n$ & $\dim(\C[\M_{2,2}^n]^G)$ & $|S_n| $& Lower bound \\ \hline
2 & 3 & 3 &3 \\
3 & 6 & 6&6\\
4 & 10 & 11& 11\\
5 & 14 & 20& 16\\
6 & 18 & 36 & 21 
\end{tabular}
\end{table}
\end{center}

In particular, our results show that no polynomial separating set for $\C[\M_{2,2}^n]^G$ exists when $n \geq 4$. Further, for $n \leq 4$ the given generating set is a minimum separating set, but for $n \geq 5$ it may not be.

\section{Quiver Interpretation}
A {\it quiver} is a quadruple $Q = (Q_0,Q_1,t,h)$, consisting of two ordered sets $Q_0$ (vertices)  and $Q_1$ (arrows), along with two functions $t, h: Q_1 \rightarrow Q_0$ (tail and head respectively). It is usually visualised as a directed graph with a node for each element of $Q_0$, and for each $a \in Q_1$ a directed edge leading from $t(a)$ to $h(a)$.

Let $\kk$ be any field.  A {\it representation} $V$ of the quiver $Q$ over $\kk$ with dimension vector $\alpha$ is an assignment to each vertex $x \in Q_0$ of a vector space $V(x)$, and to each arrow $a \in Q_1$ of a linear map $V(a): V(t(a)) \rightarrow V(h(a))$, where we write $\alpha = (\alpha(x): x \in Q_0)$ and $\alpha(x) = \dim(V(x))$ for all $x \in Q_0$.

Let $V$ be a representation of the quiver $Q$ with $|Q_0| = k$, $|Q_1|=n$. By choosing a basis of each vector space $V(x)$, we may identify $V$ with the $n$-tuple of matrices $$\bA = (A_1, A_2, \ldots, A_n)$$ where $A_j \in \M_{\alpha(t(a_j)),\alpha(h(a_j))}$ for all $j$ is the matrix representing $V(a_j)$ with respect to the chosen basis, and a pair of representations is said to equivalent if the matrices associated to them can be made the same by choosing diffferent bases for the $V(x)$. Choosing a different basis is tantamount to replacing $\bA$ with 
$$g \cdot \bA:= (g_{t(a_1)} A_1 g^{-1}_{h(a_1)},  g_{t(a_2)} A_2 g^{-1}_{h(a_2)}, \ldots,g_{t(a_n)} A_n g^{-1}_{h(a_n)} )$$ where
$$g = (g_{x_1},g_{x_2}, \ldots, g_{x_k}) \in  \GL_{\alpha}(\kk):= \Pi_{i=1}^k \GL_{\alpha(x_i)}(\kk)$$ is the $k$-tuple of change of basis matrices where $g_{x_i}$ describes the change of basis on $V(x_i)$. Thus, we have an action of $\GL_{\alpha}(\kk)$ on 
\[\vv_{\alpha}:= \Pi_{i=1}^n \M_{\alpha(t(a_i)),\alpha(h(a_i))}\] and a pair of $n$-tuples of matrices $\bA, \bA' \in \vv_{\alpha}$ represent equivalent representations of $Q$ if and only if they lie in the same $\GL_{\alpha}(\kk)$-orbit.

There is a natural notion of direct sum for representations of a given quiver: if $V$ and $W$ are representations of $Q$ over $\kk$ with dimension vectors $\alpha$ and $\beta$ respectively, then $V \oplus W$ is the representation of $Q$ over $\kk$ with dimension vector $\alpha+\beta \in \Z^k$ defined by
\[(V \oplus W)(x) = V(x) \oplus W(x)\] for all $x \in Q_0$ and
\[(V \oplus W)(a) = V(a) \oplus W(a)\] for all $a \in Q_1$. A representation is said to be {\it indecomposable} if it cannot be written as a direct sum of two non-trivial representations. $Q$ is said to have:

\begin{enumerate}
\item {\it finite representation type} if $Q$ has only finitely many inequivalent indecomposable representations;
\item {\it tame representation type} if the inequivalent indecomposable representations of $Q$ in each dimension vector occur in finitely many one-parameter families;
\item {\it wild representation type} otherwise.
\end{enumerate}

We recommend \cite{DerksenWeyman} as a good source for learning more about the representation theory of quivers and its connection with invariant theory. 

Now let $G$ denote the subgroup $\SL_{\alpha}(\C):= \Pi_{i=1}^k \SL_{\alpha(x_i)}(\C)$ of $\GL_{\alpha}(\C)$. The algebra $\C[\vv_{\alpha}]^G$ is called the {\it algebra of semi-invariants} associated to $Q$ with dimension vector $\alpha$. A remarkable result linking representation type of quivers and invariant theory is the following (see \cite{SkowronskiWeyman} for proof):

\begin{prop}[Skowronski-Weyman]\label{SW}  Let $Q$ be a quiver. Then the following are equivalent:
\begin{enumerate}
\item $Q$ has tame representation type;

\item $\C[\vv_{\alpha}]^G$ is a polynomial ring or hypersurface for each dimension vector $\alpha$.
\end{enumerate}
\end{prop}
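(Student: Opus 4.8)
The plan is to deduce the theorem from the explicit description of rings of quiver semi-invariants obtained by Schofield--van den Bergh, Derksen--Weyman and Domokos--Zubkov. For an acyclic $Q$ one has a weight decomposition $\C[\vv_{\alpha}]^{G}=\bigoplus_{\sigma\cdot\alpha=0}\C[\vv_{\alpha}]^{G}_{\sigma}$, where $\C[\vv_{\alpha}]^{G}_{\sigma}$ is the space of $\GL_{\alpha}$-semi-invariants of weight $\sigma\in\Z^{Q_{0}}$, spanned by Schofield semi-invariants $c^{W}$ attached to representations $W$ whose dimension vector is orthogonal to $\alpha$ in the Euler form, and $\dim\C[\vv_{\alpha}]^{G}_{\sigma}$ is controlled by Kac's canonical decomposition of $\alpha$. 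First I would reduce to the case of a connected quiver $Q$ (this is in fact the case the statement genuinely addresses: for disconnected $Q$, $\C[\vv_{\alpha}]$ is a tensor product over the components with $G$ acting factorwise, and a tensor product of two genuine hypersurfaces is only a codimension-two complete intersection, so (2) must be read componentwise). Oriented cycles I would dispose of directly --- the Jordan quiver gives a polynomial ring, while a connected quiver with a cycle and enough additional structure is wild and is treated as in the last step.

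For the implication ``tame $\Rightarrow$ polynomial ring or hypersurface'' I would use Gabriel's theorem together with the Donovan--Freislich/Nazarova classification: a connected acyclic quiver is tame exactly when its underlying graph is Dynkin or Euclidean. If $Q$ is Dynkin it is representation-finite, so for every $\alpha$ the canonical decomposition consists of real Schur roots, the generic representation is rigid and its orbit dense; thus $(\GL_{\alpha},\vv_{\alpha})$ is a prehomogeneous vector space, every relative invariant is (up to scalar) a monomial in the reduced equations $f_{1},\dots,f_{k}$ of the codimension-one components of the complement of the dense orbit, these are algebraically independent, and since $\SL_{\alpha}=[\GL_{\alpha},\GL_{\alpha}]$ this yields $\C[\vv_{\alpha}]^{G}=\C[f_{1},\dots,f_{k}]$, a polynomial ring. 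If $Q$ is Euclidean then $\vv_{\alpha}$ is in general not prehomogeneous: the canonical decomposition has the shape $p\,\delta\oplus\bigoplus_{i}\beta_{i}$ with $\delta$ the isotropic null root and the $\beta_{i}$ real Schur roots, and the only source of a relation among semi-invariants is the $\mathbb{P}^{1}$-family of regular representations of dimension $\delta$. A case analysis over the five Euclidean families --- reducing to a convenient orientation via reflection functors --- should show that this $\mathbb{P}^{1}$-family forces at most one relation, so that $\C[\vv_{\alpha}]^{G}$ is a polynomial ring or a hypersurface (e.g.\ for the four-subspace quiver $\tilde D_{4}$ at $\alpha=\delta$ one recovers the cone over $\mathrm{Gr}(2,4)$, a quadric hypersurface).

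For the converse I would argue contrapositively: if $Q$ is wild, exhibit one dimension vector $\alpha$ for which $\C[\vv_{\alpha}]^{G}$ is not a complete intersection of codimension $\le 1$. Passing to a connected component and then to a suitable full subquiver one may assume $Q$ is minimally wild (equivalently, its Tits form is indefinite), and choose a small imaginary root $\alpha$ --- for instance the dimension vector witnessing the wildness, or the least $\alpha$ with $q_{Q}(\alpha)$ sufficiently negative. One then computes $\dim\C[\vv_{\alpha}]^{G}$ from \eqref{krulldim} and bounds the minimal number of generators below: each weight $\sigma\perp\alpha$ indecomposable in the weight monoid $\{\sigma:\C[\vv_{\alpha}]^{G}_{\sigma}\ne 0\}$ must contribute a generator, and for wild $Q$ this monoid, together with the multiplicities $\dim\C[\vv_{\alpha}]^{G}_{\sigma}$, is rich enough that the number of forced generators exceeds $\dim\C[\vv_{\alpha}]^{G}+1$. (Already for the $m$-Kronecker quiver with $m\ge 5$ and $\alpha=(1,2)$ one gets the cone over $\mathrm{Gr}(2,m)$, which is not a complete intersection.)

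The hard part will be the Euclidean case of the first implication: there is no slick uniform argument, and one must compute $\C[\vv_{\alpha}]^{G}$ for \emph{every} dimension vector of \emph{every} Euclidean quiver, which requires complete control of Kac's canonical decomposition on these diagrams and a delicate analysis of how the $\mathbb{P}^{1}$-family attached to the null root $\delta$ interacts with the real-Schur-root summands. A secondary difficulty is the wild direction: while conceptually transparent, it needs a genuinely uniform choice of bad dimension vector, and enough of the semi-invariant ring in hand, to certify the failure of the complete-intersection property simultaneously for all wild $Q$.
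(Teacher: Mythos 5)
First, a point of context: the paper does not prove this statement at all --- it is quoted as Proposition \ref{SW} with an explicit pointer ``see \cite{SkowronskiWeyman} for proof'', so there is no in-paper argument to compare yours against; the benchmark is the Skowronski--Weyman paper itself. Measured against that, what you have written is a plan rather than a proof, and you concede as much: the two items you defer (the full Euclidean analysis in the direction tame $\Rightarrow$ polynomial/hypersurface, and a uniform choice of a bad dimension vector for every wild quiver) are exactly where the entire content of the theorem lives. The Dynkin step via prehomogeneity and Sato--Kimura is fine in outline (and is the ingredient this paper itself cites separately), but ``a case analysis over the five Euclidean families should show that the $\mathbb{P}^1$-family forces at most one relation'' is not an argument: one needs control of the semi-invariant algebra for \emph{every} dimension vector and every orientation, including the regular tubes and decomposable canonical decompositions, and this occupies most of the original paper. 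As it stands, the forward implication is asserted, not established.

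The wild direction has a concrete gap beyond the admitted one. Your only worked example, the $m$-arrow Kronecker quiver at $\alpha=(1,2)$ giving the cone over $\mathrm{Gr}(2,m)$, certifies failure only for $m\geq 5$: for $m=3$ the ring is polynomial and for $m=4$ it is a quadric hypersurface, yet these quivers are already wild, so wildness there must be detected at other dimension vectors. The present paper's closing examples make precisely this point --- for the $3$- and $4$-arrow Kronecker quiver at $(2,2)$, and for the loop quivers at small $n$, the semi-invariant rings are polynomial or hypersurfaces, which is why the analogous verification for the paper's Conjecture is left open there. So ``pass to a minimally wild full subquiver, pick a small imaginary root, and count indecomposable weights'' is a hope rather than a step: you give no proof that the weight-monoid count exceeds $\dim+1$ for the minimal wild quivers, and no list of those quivers is engaged. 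Finally, your weight-space machinery is stated for acyclic $Q$, and ``a connected quiver with a cycle and enough additional structure is wild'' skips the tame quivers with oriented cycles (e.g.\ cyclically oriented $\tilde{A}_n$), whose semi-invariant algebras also have to be computed for the equivalence to hold as stated. In short: the strategy is the standard and reasonable one, but none of the hard implications is actually carried out, so this cannot be accepted as a proof of Proposition \ref{SW}.
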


Along similar lines, Sato and Kimura \cite{SatoKimura} showed that if $Q$ has finite representation type then $\C[\vv_{\alpha}]^G$ is polynomial for all dimension vectors $\alpha$. We make the following conjecture generalising Proposition \ref{SW}:

\begin{conj}  Let $Q$ be a quiver. Then the following are equivalent:
\begin{enumerate}
\item $Q$ has tame representation type;

\item $\C[\vv_{\alpha}]^G$ contains a polynomial or hypersurface separating set, for each dimension vector $\alpha$.
\end{enumerate}
\end{conj}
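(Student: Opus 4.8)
\emph{The direction $(1) \Rightarrow (2)$} is immediate from Proposition~\ref{SW}: if $Q$ is not of wild type then $\C[\vv_{\alpha}]^G$ is a polynomial ring or a hypersurface for every dimension vector $\alpha$ (for the finite case one may instead quote the theorem of Sato--Kimura), and any minimal homogeneous generating set $S$ of $\C[\vv_{\alpha}]^G$ is then a separating set with $\C[S] = \C[\vv_{\alpha}]^G$, i.e.\ a polynomial or hypersurface separating set. So the whole content lies in $(2) \Rightarrow (1)$, which I would prove in the contrapositive: \emph{if $Q$ is of wild type, then there is a dimension vector $\alpha$ for which no separating set $S$ for $\C[\vv_{\alpha}]^G$ satisfies $|S| \leq \dim(\C[\vv_{\alpha}]^G) + 1$; in particular none is polynomial or a hypersurface.}

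\emph{A reduction.} If $Q'$ is the full subquiver of $Q$ on a subset of its vertices and $\alpha'$ a dimension vector for $Q'$, then extending $\alpha'$ by zeros to a dimension vector $\alpha$ for $Q$ gives $\vv_{\alpha} = \vv_{\alpha'}$ and $\SL_{\alpha} = \SL_{\alpha'}$ (as $\SL_0$ is trivial and $\M_{0,k} = \M_{k,0} = 0$), so that $\C[\vv_{\alpha}]^{\SL_{\alpha}} = \C[\vv_{\alpha'}]^{\SL_{\alpha'}}$. Since representation type depends only on the underlying graph, one may thus delete vertices repeatedly and reduce to the wild quivers all of whose proper full subquivers are of finite or tame type. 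Up to orientation these are the multiple-arrow Kronecker quivers $K_n$ ($n \geq 3$ parallel arrows between two vertices) together with a finite list of trees and other simply-laced graphs from the Tits-form classification; the $K_n$ cannot be reduced further, since deleting arrows is not realised by any choice of dimension vector. The results of the present paper already dispose of $K_n$ for $n \geq 5$ (take $\alpha = (2,2)$, where $|S| \geq 5n-9 > \dim(\C[\vv_{\alpha}]^G)+1$), and hence of every quiver containing such a $K_n$ as a full subquiver; the remaining vertex-minimal wild quivers --- in particular $K_3$, $K_4$, and the simply-laced minimal wild graphs --- are open and call for a dimension vector not treated here.

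\emph{Analysis at a fixed quiver.} For each remaining quiver $Q$ the plan is to imitate the present paper: fix an orientation and a dimension vector $\alpha$; determine the non-stable locus $\uu \subseteq \vv_{\alpha}$ from the Hilbert--Mumford criterion; write $\mathcal{S}_{G,\vv_{\alpha}} = \overline{\Gamma_{G,\vv_{\alpha}}} \cup \mathcal{S}_{G,\vv_{\alpha},\uu}$ via \eqref{firstdecomp}; express $\mathcal{S}_{G,\vv_{\alpha},\uu}$ as $G^2 \cdot (\ww^2 \cap \mathcal{S}_{G,\vv_{\alpha}})$ for a suitable Borel-stable slice $\ww \subseteq \vv_{\alpha}$ and, after a linear change of coordinates as in Lemma~\ref{isom}, relate $\ww^2 \cap \mathcal{S}_{G,\vv_{\alpha}}$ to the nullcone $\mathcal{N}_{G,\vv_{\alpha}}$; decompose the nullcone into irreducible components; and compute the dimensions of the resulting components of $\mathcal{S}_{G,\vv_{\alpha}}$ and of their pairwise intersections, including the intersections with $\overline{\Gamma_{G,\vv_{\alpha}}}$. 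The goal in each case is an integer $k \geq 2$ for which $\mathcal{S}_{G,\vv_{\alpha}}$ is not connected in codimension $k$: as $\SL_{\alpha}$ has no nontrivial character and (one checks) every component of $\mathcal{S}_{G,\vv_{\alpha}}$ passes through the origin, Proposition~\ref{grothbound} then yields $|S| \geq \dim(\C[\vv_{\alpha}]^G) + k \geq \dim(\C[\vv_{\alpha}]^G) + 2$ for every separating set $S$. If instead the nullcone decomposition already produces a component of $\mathcal{S}_{G,\vv_{\alpha}}$ of codimension $\geq \dim(\C[\vv_{\alpha}]^G) + 2$, then Proposition~\ref{krullbound} suffices and the connectivity computation is unnecessary.

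\emph{The main obstacle} is the third step. There is no uniform procedure for decomposing the nullcone of a general quiver semi-invariant algebra into irreducible components --- this is understood only in special cases such as the two-vertex situation of Burgin--Draisma --- and the intersection-dimension estimates that control connectivity are unavoidably ad hoc. Moreover, the obstruction need not appear at the smallest dimension vectors: for $K_n$ with $n$ small, $\alpha = (2,2)$ gives a polynomial ring or a hypersurface, so one must also decide \emph{which} $\alpha$ to use, with no a priori bound available. A cleaner route would be a direct proof that wild representation type forces $\mathcal{S}_{G,\vv_{\alpha}}$ to be disconnected in codimension at least $2$ for some $\alpha$ --- equivalently, forces the quotient $\vv_{\alpha}/\!\!/G$ to have singularities incompatible with a polynomial or hypersurface separating algebra --- but this would require structural information on quiver nullcones and quotients that is not currently available. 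Finally one should keep in mind that ``$\C[\vv_{\alpha}]^G$ is not itself a hypersurface'' does not formally preclude a hypersurface separating subalgebra, so the argument must really produce the connectivity (or codimension) statement and not merely appeal to Skowronski--Weyman.
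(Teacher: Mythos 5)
The statement you are addressing is a conjecture: the paper does not prove it, and explicitly records only that the implication $(1)\Rightarrow(2)$ follows from Proposition~\ref{SW} (generating sets are separating sets), while $(2)\Rightarrow(1)$ is left open, with Theorem~\ref{main22} and the results of \cite{Elmer2x2} supplying evidence for two families of quivers (the $n$-arrow Kronecker quivers at dimension vector $(2,2)$ for $n\geq 5$, and the $n$-loop quiver) and the small cases explicitly flagged as unresolved. Your treatment of $(1)\Rightarrow(2)$ is correct and coincides with the paper's remark, and your reduction to vertex-minimal wild quivers via extension of dimension vectors by zeros is sound as far as it goes (note, though, that your list of vertex-minimal wild quivers omits quivers with loops, e.g.\ a single vertex with two or more loops, which is precisely the second example the paper discusses).

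For $(2)\Rightarrow(1)$, however, what you give is a research programme rather than a proof, and you concede as much: for the genuinely minimal wild cases --- $K_3$, $K_4$ (where $\alpha=(2,2)$ yields a polynomial ring, resp.\ a hypersurface, so a different $\alpha$ must be found), the two- and three-loop quivers at small dimension, and the simply-laced minimal wild graphs --- you have neither a choice of dimension vector nor the nullcone decomposition and intersection-dimension estimates needed to run Proposition~\ref{krullbound} or Proposition~\ref{grothbound}. These are exactly the steps the paper identifies as the remaining content of the conjecture (there is no analogue of the Burgin--Draisma decomposition or of Lemma~\ref{isom} beyond the two-vertex and one-vertex settings), so the gap is genuine and your proposal does not close it; it should be presented as a strategy consistent with, but not exceeding, what the paper already establishes. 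Your closing caution is well taken and worth keeping: failure of $\C[\vv_{\alpha}]^G$ to be a hypersurface does not by itself exclude a hypersurface separating set, so any completion of the argument must produce the codimension or connectivity bound directly.
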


Here a hypersurface separating set is a separating set generating a hypersurface, i.e. with cardinality $\dim(\C[\vv_{\alpha}]^G+1)$.
Since generating sets are separating sets, the forward direction is known. To prove the conjecture, it remains to show that if $Q$ has wild representation type, then there exists a dimension vector $\alpha$ for which $|S| > \dim(\C[\vv_{\alpha}]^G+1)$ for every separating set $S \subseteq \C[\vv_{\alpha}]^G$.

\begin{eg} Representations of the quiver $Q_n$ with diagram

\begin{center}
\begin{tikzpicture}
\draw[fill] (-1,0) circle[radius=0.1];
\draw[fill] (1,0) circle[radius=0.1];
\draw[->] (-0.7,0.3) -- (0.7,0.3);
\node at (0,0.1){$\vdots$};
\draw[->] (-0.7,-0.3) -- (0.7,-0.3);
\end{tikzpicture}
\end{center}

with $n$ arrows and dimension vector $(l,m)$ over $\C$ can be identified with $n$-tuples of $l \times m$ matrices, i.e. elements of $\M_{l,m}^n(\C)$, and two such are isomorphic if they lie in the same $\GL_{l}(\C) \times \GL_m(\C)$ orbit, where the action is by simultaneous left- and right- multiplication. The quiver is of finite type for $n=1$, tame for $n=2$, and wild for $n \geq 3$. Let $G = \SL_l(\C) \times \SL_m(\C)$. Theorem \ref{main22} shows that the algebra of invariants $\C[\M_{2,2}^n]^G$ does not contain a hypersurface separating algebra if $n\geq 5$, while $\C[\M_{2,2}^n]^G$ is known to be a polynomial algebra for $n \leq 3$, and a hypersurface for $n=4$. To verify the conjecture for this quiver it remains to show that, for some dimension vector $\alpha = (l,m)$ and $n=3$ or $n=4$, $\C[\M_{l,m}^n]^G$ does not contain a polynomial or hypersurface separating set. 
\end{eg}

\begin{eg} Representations of the quiver $Q_n$

\begin{center}
\begin{tikzpicture}
\draw[fill] (0,0) circle[radius=0.1];

\draw[->] (0.3,0) arc (-60:240:0.6);
\node at (0,1.4) {$n$};
\end{tikzpicture}
\end{center}
where the $n$ represents $n$ separate arrows and $\alpha = l$ can be identified with $n$-tuples of $l \times l$ matrices, i.e. elements of $\M_{l,l}^n$, with a pair of such representations equivalent if and only if those $n$-matrices are simultaneously conjugate under the action of $\GL_{l}(\C)$. Let $G = \SL_l(\C)$. The semi-invariant rings of such actions were studied in \cite{Elmer2x2}. The quiver $Q_n$ is of tame type if $n=1$ and wild otherwise. Theorem 1.4 in loc. cit. shows that $\C[\M_{2,2}^n]^G$ does not contain a hypersurface separating set for $n \geq 4$, and $\C[\vv_{2,2}^n]^G$ is known to be polynomial if $n=2$ and a hypersurface if $n=3$. Thus to verify the conjecture for this quiver  it remains to show that, for some dimension $l$ and $n=2$ or $n=3$, $\C[\M_{l,l}^n]^G$ does not contain a polynomial or hypersurface separating set.  
\end{eg}

\bibliographystyle{plain}
\bibliography{MyBib}

\end{document}